\newcommand\cyr{%
\renewcommand\rmdefault{wncyr}%
\renewcommand\sfdefault{wncyss}%
\renewcommand\encodingdefault{OT2}%
\normalfont
\selectfont}
\DeclareTextFontCommand{\textcyr}{\cyr}
\def\sssub{\@startsection{paragraph}{4}}
\renewcommand\paragraph{\@startsection{paragraph}{4}{\z@}{1.25ex}{0.0001pt}{\normalfont\normalsize\em}}
\numberwithin{paragraph}{subsubsection}
\newcommand{\Z}{\mathbb{Z}}
\newcommand{\Q}{\mathbb{Q}}
\newcommand{\CA}{\mathcal{A}}
\newcommand{\CB}{\mathcal{B}}
\newcommand{\CK}{\mathcal{K}}
\newcommand{\CL}{\mathcal{L}}
\newcommand{\CM}{\mathcal{M}}
\newcommand{\CN}{\mathcal{N}}
\newcommand{\coef}{\mathrm{coef}}
\newcommand{\mo}{{-1}}
\newcommand{\bdy}{\partial}
\newcommand{\prn}[1]{\left( #1 \right)}
\newcommand{\set}[1]{\left\{ #1 \right\}}
\newcommand{\ang}[1]{\left\langle #1 \right\rangle}
\newcommand{\ol}{\overline}
\DeclareMathOperator{\tp}{tp}
\DeclareMathOperator{\Th}{Th}
\newtheorem{theorem}{Theorem}[section]
\newtheorem{corollary}[theorem]{Corollary}
\newtheorem{proposition}[theorem]{Proposition}
\newtheorem{lemma}[theorem]{Lemma}
\theoremstyle{definition}
\newtheorem{definition}[theorem]{Definition}
\theoremstyle{remark}
\newtheorem{remark}[theorem]{Remark}
\newtheoremstyle{example}
    {\dimexpr\topsep/2\relax} 
    {\dimexpr\topsep/2\relax} 
    {}          
    {}          
    {\bfseries} 
    {.}         
    {.5em}      
    {}          
\theoremstyle{example}
\newtheorem{example}[theorem]{Example}
\begin{document}

\title{Non-$\forall$-homogeneity in free groups}

\author{Olga Kharlampovich}
\thanks{Olga Kharlampovich: City University of New York, Graduate Center and Hunter College}
\email[Olga Kharlampovich]{okharlampovich@gmail.com}

\author{Christopher Natoli}
\thanks{Christopher Natoli: City University of New York, Graduate Center}
\email[Christopher Natoli]{chrisnatoli@gmail.com}

\subjclass[2010]{03C60, 20E05, 20F70}

\maketitle

\begin{abstract}
    We prove that non-abelian free groups of finite rank at least 3 or of countable rank are not $\forall$-homogeneous. We answer three open questions from Kharlampovich, Myasnikov, and Sklinos regarding whether free groups, finitely generated elementary free groups, and non-abelian limit groups form special kinds of Fra\"iss\'e classes in which embeddings must preserve $\forall$-formulas. We also provide interesting examples of countable non-finitely generated elementary free groups.
\end{abstract}

\section{Introduction}

Perin and Sklinos \cite{PerSkl2012homogeneity} and, independently, Ould Houcine \cite{Oul2011homogeneity} proved that non-abelian free groups have the property that two tuples $\bar a$ and  $\bar b$ realize the same  first-order formulas if and only if there is an automorphism of the group sending $\bar a$ to $\bar b$.  This is a model-theoretic property called ($\aleph_0$-)homogeneity. This added to earlier results from Nies \cite{Nie2003aspects} that showed that the free group on two generators is homogeneous. In fact, Nies showed that the free group on two generators has a stronger property, which we will call $\forall$-homogeneity. A structure $\CM$ is $\forall$-homogenous if for any tuples $\bar a,\bar b\in\CM$ of the same length, if $\bar a$ and $\bar b$ realize the same universal formulas, then there is an automorphism of $\CM$ sending $\bar a$ to $\bar b$. It was an open question whether all non-abelian free groups are $\forall$-homogeneous.

The main result of this paper is providing a counterexample (Example~\ref{example-no-homog}), namely, a non-abelian free group of rank 3 that is not $\forall$-homogeneous, and extending this result to all countable free groups of higher rank.

As an immediate corollary we have that the first-order theory of a non-abelian free group does not have quantifier elimination to boolean combinations of $\forall$-formulas. Notice that  it was shown in \cite{KhaMya2006elementary} and \cite{Sel2006diophantine} that this theory has quantifier elimination down to boolean combinations of $\forall\exists$-formulas.
It is known that there is no quantifier elimination  to $\forall$-formulas because  the theory of a non-abelian free group is not model complete \cite{Per2011elementary}.

 We also build on this example to answer, in the negative, three open questions from Kharlampovich, Myasnikov, and Sklinos in \cite{KhaMyaSkl2020fraisse} regarding special kinds of Fra\"iss\'e classes: whether finitely generated, non-abelian free groups form a $\forall$-Fra\"iss\'e class; whether finitely generated, elementary free groups form a $\forall$-Fra\"iss\'e class, and whether non-abelian limit groups form a strong $\forall$-Fra\"iss\'e class. To compare, \cite{KhaMyaSkl2020fraisse} showed that the class of non-abelian limit groups is a $\forall$-Fra\"iss\'e class and that the class of abelian limit groups, i.e., finitely generated free abelian groups, form a strong $\forall$-Fra\"iss\'e class. Finally, we answer a fourth question from \cite{KhaMyaSkl2020fraisse}, showing that not all countable elementary free groups are obtained as the union of a chain of finitely generated elementary free groups.

\section{Preliminaries}

For the rest of this paper, free groups will mean non-abelian free groups, unless stated otherwise.

\begin{definition}
    Let $\CM$ be a model. Given a tuple $\bar a\in\CM$, the \emph{type} of $\bar a$ in $\CM$ is
    $\tp^\CM(\bar a)=\set{\phi(x):\CM\models\phi(\bar a)}$.
    We say $\CM$ is ($\aleph_0$-)\emph{homogeneous} if for any tuples $\bar a,\bar b\in\CM$ of the same length, $\tp^\CM(\bar a)=\tp^\CM(\bar b)$ implies there is an automorphism of $\CM$ sending $\bar a$ to $\bar b$.
    If $\CM$ is a substructure of $\CN$, then $\CM$ is an \emph{elementary substructure} of $\CN$, denoted $\CM\prec\CN$, if for every $\bar a\in\CM$, we have $\tp^\CM(\bar a)=\tp^\CN(\bar a)$.

    Analogously, if $\bar a\in\CM$, we define its \emph{$\forall$-type} by
    $\tp^\CM_\forall(\bar a)=\set{\phi(x):\text{$\CM\models\phi(\bar a)$, $\phi(x)$ is universal}}$.
    $\CM$ is \emph{$\forall$-homogeneous} if for any tuples $\bar a,\bar b\in\CM$ of the same length, $\tp^\CM_\forall(\bar a)=\tp^\CM_\forall(\bar b)$ implies there is an automorphism of $\CM$ sending $\bar a$ to $\bar b$.  (Note that Ould Houcine calls this latter property $\exists$-homogeneity.) If $\CM$ is a substructure of $\CN$, then $\CM$ is \emph{existentially closed} in $\CN$ if for every $\bar a\in\CM$, we have $\tp_\forall^\CM(\bar a)=\tp_\forall^\CN(\bar a)$.
\end{definition}

Full characterizations of elementary substructures and existentially closed substructures in the context of free groups have been proven. The work by Kharlampovich and Myasnikov \cite{KhaMya2006elementary} and, separately, Sela \cite{Sel2006diophantine} in positively answering Tarski's question of whether non-abelian free groups are elementarily equivalent proved the stronger result that any non-abelian free factor $F_n$ of a free group $F_{n+m}$ is an elementary substructure, provided $n\ge2$. Perin proved the converse, thus characterizing elementary substructures for free groups:
\begin{theorem}\label{perin}
    \cite[Theorem~1.3]{Per2011elementary}
    Let $H$ be a proper subgroup of a finitely generated free group $F$. Then $H$ is an elementary substructure of $F$ if and only if $H$ is a non-abelian free factor of $F$.
\end{theorem}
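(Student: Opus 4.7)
The forward direction is the Kharlampovich--Myasnikov and Sela result quoted just before the theorem, so the task is the converse: assuming $H$ is a proper subgroup of a finitely generated free group $F$ with $H \prec F$, show that $H$ is a non-abelian free factor. First I collect the cheap consequences. Since $H$ is a subgroup of a free group, Nielsen--Schreier gives that $H$ is free. The existential sentence $\exists x \exists y\,(xy \ne yx)$ is true in the non-abelian group $F$, hence in $H$, so $H$ is non-abelian as well. The crux is now structural: I want to show $H$ is a free factor.

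The strategy has two steps. First, I would construct a retraction $r : F \to H$, that is, a homomorphism with $r|_H = \mathrm{id}_H$. Second, I invoke the classical fact that a retract of a finitely generated free group is a free factor. Granting for the moment that $H$ is finitely generated by some tuple $\bar h$, fix a finite generating set $\bar g$ of $F$, write each $h_i = v_i(\bar g)$, and let $R$ be a finite set of relators of $F$ on the generators $\bar g$ (a finitely generated free group is finitely presented on any finite generating tuple). Then
\[ \exists \bar y \left( \bigwedge_{w \in R} w(\bar y) = 1 \;\wedge\; \bigwedge_i v_i(\bar y) = h_i \right) \]
is an existential formula with parameters in $H$ that is satisfied in $F$ by $\bar y = \bar g$. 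By $H \prec F$ it is also satisfied in $H$ by some $\bar y$, and the first conjunct says that $\bar g \mapsto \bar y$ extends to a well-defined homomorphism $r : F \to H$, while the second forces $r(h_i) = h_i$, so $r|_H = \mathrm{id}_H$, as desired.

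The main obstacle, which I expect to be the real content of the proof, is establishing that $H$ is finitely generated. Elementary equivalence alone gives no leverage: every countable non-abelian free group is elementarily equivalent to $F_2$, so $H \equiv F$ is compatible with $H$ having infinite rank a priori. This is where one has to follow Perin's approach in earnest: work with the abelian JSJ decomposition of $F$ relative to $H$, and run a Rips--Sela / Bestvina--Feighn shortening argument on sequences of homomorphisms $F \to H$ that pin $\bar h$ (produced, on finite subtuples, by the existential argument above) to show that $F$ has the structure of a hyperbolic floor, and iteratively of a hyperbolic tower, over $H$. In the specific setting of free groups, a hyperbolic tower reduces to a free product decomposition, so $F = H * F'$ for some free group $F'$. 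This conclusion simultaneously yields finite generation of $H$, retrospectively justifies the retraction argument above, and gives the free factor decomposition; non-abelianness and propriety then carry over to complete the theorem.
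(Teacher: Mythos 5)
The paper does not actually prove this statement: Theorem~\ref{perin} is quoted from Perin, with the forward direction attributed to Kharlampovich--Myasnikov and Sela, so there is no in-paper argument to compare yours against. Judged on its own terms, your attempt has a genuine gap at its central step. The preliminary observations (that $H$ is free and non-abelian) and the derivation of a retraction $r:F\to H$ are fine, but the ``classical fact'' that a retract of a finitely generated free group is a free factor is false. For example, $r(a)=(a^2b^3)^{-1}$, $r(b)=a^2b^3$ is a retraction of $F(a,b)$ onto $\langle a^2b^3\rangle$, which is not a free factor because $a^2b^3$ is not primitive; similarly $\langle a,\,b^2c^3\rangle$ is a non-abelian rank-$2$ retract of $F(a,b,c)$ that is not a free factor (killing the normal closure of $a$ would make $b^2c^3$ primitive in $F(b,c)$). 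More tellingly, your retraction argument uses only an existential formula with parameters in $H$, hence only that $H$ is existentially closed in $F$ --- and the main point of this very paper is that existential closedness does not imply being a free factor: in Example~\ref{example-no-homog}, $L=\langle a,b\rangle$ is existentially closed in the rank-$4$ free group $M$, hence a retract of $M$ by your own argument, yet is not a free factor of $M$. So the retraction route cannot possibly close the proof; any correct argument must use the full strength of $H\prec F$, at least at the level of $\forall\exists$-formulas.

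Your fallback --- ``follow Perin's approach in earnest'' --- is indeed the only known proof, but as written it is a pointer to that proof rather than a proof: the shortening argument, the construction of the hyperbolic floor and tower structure of $F$ over $H$, and the collapse of such a tower over a free subgroup to a genuine free factorization are each substantial, and finite generation of $H$ falls out of that analysis rather than being the sole remaining obstacle, as your write-up suggests. In short: the easy half and the retraction are correct but insufficient, the step ``retract $\Rightarrow$ free factor'' is wrong, and the real content is deferred to the cited source.
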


Kharlampovich, Myasnikov, and Sklinos give a characterization of existentially closed subgroups of limit groups (a class of groups that includes free groups) in terms a construction called extensions of centralizers.

\begin{definition}
    An \emph{extension of a centralizer} of a group $G$ is a group $\ang{G,t\mid[C_G(u),t]=1}$ where $u$ is some fixed element in $G$ and $t$ is a new letter. If $G=G_0<\cdots<G_n$ and each $G_{k+1}$ is an extension of a centralizer of $G_k$, i.e., $G_{k+1}=\ang{G_k,t_k\mid[C_{G_k}(u_k),t_k]=1}$ where $u_k\in G_k$, we call $G_n$ a \emph{finite iterated extension of centralizers} over $G$.
\end{definition}

Recall that a group $G$ is called {\it fully residually
free} (or {\it freely discriminated}, or {\it $\omega$-residually
free}) if for any finite subset of non-trivial elements $g_1, \ldots,
g_n \in G$ there exists a homomorphism $\phi$ of $G$ into a free
group $F$, such that $\phi(g_i) \neq 1$ for $i = 1, \ldots, n$.   $L$  is fully residually free if  and only if $\Th_\forall(L)=\Th_\forall(F)$ where $F$ is a  free group and $\Th_\forall$ is the universal theory of a structure. Finitely generated fully residually free groups are also known as {\em limit groups}.

\begin{lemma}\label{ex-closed-ice}
    \cite[Theorem~3.6 and Lemma~3.7]{KhaMyaSkl2020fraisse}
    Let $L,M$ be  limit groups with $L\le M$. Then $L$ is existentially closed in $M$ if and only if there is a finite iterated centralizer extension $L_n$ of $L$ such that $M\le L_n$.
\end{lemma}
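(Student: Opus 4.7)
The plan is to prove the two directions separately. The backward direction reduces to a transitivity observation plus the fact that single centralizer extensions are existentially closed extensions; the forward direction is the main obstacle and requires the structural theory of limit groups.

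For the backward direction, assume $M\le L_n$. I would first note the purely model-theoretic transitivity that if $L\le M\le N$ and $L$ is existentially closed in $N$, then $L$ is existentially closed in $M$: any existential formula $\exists\bar y\,\psi(\bar a,\bar y)$ with $\bar a\in L$ satisfied in $M$ lifts to $N$ (existential formulas are preserved by embeddings), hence descends to $L$ by the ex-closure hypothesis. This reduces the problem to showing $L$ is existentially closed in $L_n$, which I would prove by induction on $n$. The single-step case is $L_k\le L_{k+1}=\ang{L_k,t\mid[C_{L_k}(u),t]=1}$. For each $m\in\Z$, the assignment $t\mapsto u^m$ extends to a retraction $\phi_m\colon L_{k+1}\to L_k$ fixing $L_k$, since $u$ commutes with every element of $C_{L_k}(u)$. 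Given an existential $\exists\bar y\,\psi(\bar a,\bar y)$ with $\bar a\in L_k$ and witnesses $\bar b\in L_{k+1}$, I would argue that for $|m|$ sufficiently large the map $\phi_m$ is injective on the finite set of elements relevant to evaluating $\psi(\bar a,\bar b)$, so $\phi_m(\bar b)\in L_k$ remain valid witnesses in $L_k$.

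For the forward direction, assume $L$ is existentially closed in $M$; I want to embed $M$ into a finite iterated centralizer extension of $L$. The starting point would be the Kharlampovich--Myasnikov (and Sela) structure theorem that every limit group embeds into a finite iterated centralizer extension of a free group. Applied to $M$, this provides an ambient tower, and the task is to exploit ex-closure to rebuild the tower over $L$ rather than over a free group. Heuristically, any ``new'' structural feature of $M$ beyond $L$---such as a new vertex or edge group in a JSJ-type decomposition of $M$ relative to $L$---must, by ex-closure, be accounted for by existential formulas already satisfied in $L$, and this should force every such piece to be realized as a centralizer extension at some stage of a tower built over $L$.

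The main obstacle will be the forward direction: converting the ex-closure hypothesis into a concrete embedding of $M$ into a tower over \emph{exactly} $L$ (rather than over some larger group) requires sophisticated use of JSJ decompositions, Makanin--Razborov diagrams, and the theory of test sequences for limit groups, which is where the technical heart of the argument in \cite{KhaMyaSkl2020fraisse} lies. By contrast, the backward direction is essentially bookkeeping once one has the retraction lemma for a single centralizer extension.
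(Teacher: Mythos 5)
First, a point of reference: the paper does not prove this lemma at all --- it is imported verbatim from \cite[Theorem~3.6 and Lemma~3.7]{KhaMyaSkl2020fraisse} --- so there is no internal proof to compare against, and your attempt has to be judged on its own. Your backward direction is essentially right and is the standard argument: transitivity of existential closedness plus the retractions $\phi_m\colon t\mapsto u^m$. The one step you assert without justification is that $\phi_m$ is eventually injective on any fixed finite set of nontrivial elements. This is exactly the statement that the family $\{\phi_m\}$ discriminates $L_{k+1}$ into $L_k$, and it is \emph{not} true for an arbitrary base group; it relies on $L_k$ being a limit group (so that $C_{L_k}(u)$ is maximal abelian and the big powers condition of Baumslag/Myasnikov--Remeslennikov applies to normal forms $g_0u^{mn_1}g_1u^{mn_2}\cdots$). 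You should name that property rather than treat the claim as obvious, and also note that each $L_k$ in the tower is again a limit group so the induction is licensed.

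The genuine gap is the forward direction, where you do not give an argument but a heuristic, and the heuristic points in the wrong direction. The actual proof does not start from an embedding of $M$ into a tower over a free group and then ``rebuild it over $L$''; it proceeds through a concrete intermediate step that you never state: since limit groups are finitely presented, writing $M=\ang{\bar m\mid r_1(\bar m)=\cdots=r_s(\bar m)=1}$ with $L$ generated by words $\bar w(\bar m)$, the existential sentence (with parameters $\bar l\in L$) asserting the existence of $\bar y$ satisfying the relations, satisfying $\bar w(\bar y)=\bar l$, and not killing finitely many prescribed nontrivial elements of $M$, holds in $M$ and hence in $L$. This produces, for every finite subset of $M\setminus\{1\}$, a homomorphism $M\to L$ restricting to the identity on $L$ and nontrivial on that subset; that is, $M$ is discriminated by retractions onto $L$. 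Only then does one invoke the relative Kharlampovich--Myasnikov embedding theorem, that a finitely generated group fully residually $L$ over $L$ embeds into a finite iterated centralizer extension of $L$ compatibly with $L$. Your talk of ``new vertex groups being accounted for by existential formulas'' skips the discrimination step entirely, and that step is the only part of the forward direction that uses the hypothesis; the rest is a citation to the embedding theorem (which indeed rests on Makanin--Razborov machinery and cannot be reproved in a few lines). As written, the forward direction is not a proof.
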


\begin{remark}\label{remark-ice}
    For embeddings into a sequence of centralizer extensions, we can without loss consider the sequence to be a mixture of centralizer extensions and free products with free groups. I.e., suppose $L$ is a non-abelian limit group and $M$ embeds in a sequence $L=L_0<\cdots<L_n$ where for all $k$, $L_{k+1}$ is a centralizer extension of $L_k$ or a free product of $L_k$ with a countable free group. Then $M$ can be obtained as a subgroup of a finite iterated centralizer extension over $L$. Indeed, if $L_{k+1}=L_k*\ang{x_1,\ldots,x_m}$ where $x_i$ are new letters, then $L_{k+1}$ embeds in the centralizer extension $\ang{L_k,t\mid[C_{L_k}(u),t]=1}$ by mapping $x_i$ to $t^igt^igt^i$ where $g\in L_k-C_{L_k}(u)$.
\end{remark}

\cite{KhaMyaSkl2020fraisse} define two special kinds of Fra\"iss\'e classes, $\forall$-Fra\"iss\'e classes and strong $\forall$-Fra\"iss\'e classes. A \emph{$\forall$-embedding} of a structure into another (or a \emph{partial $\forall$-embedding} of a tuple into a structure), denoted with $\to_\forall$, is an embedding that preserves $\forall$-formulas. Note that the inclusion map from some model $\CA$ into another model $\CB$ is a $\forall$-embedding if and only if $\CA$ is existentially closed in $\CB$. Fix a language $\CL$.
\begin{definition}
    Let $\CK$ be a countable (with respect to isomorphism types) non-empty class of finitely generated $\CL$-structures with the following properties:
    \begin{itemize}
        \item (IP) the class $\CK$ is closed under isomorphisms;
        \item ($\forall$-HP) the class $\CK$ is closed under finitely generated $\forall$-substructures (i.e., existentially closed substructures);
        \item ($\forall$-JEP) if $\CA_1,\CA_2$ are in $\CK$, then there are $\CB$ in $\CK$ and $\forall$-embeddings $f_i:\CA_i\rightarrow_{\forall} \CB$ for $i\leq 2$;
        \item ($\forall$-AP) if $\CA_0,\CA_1,\CA_2$ are in $\CK$ and $f_i:\CA_0\rightarrow_{\forall} \CA_i$ for $i\leq 2$ are $\forall$-embeddings, then there are $\CB$ in $\CK$ and $\forall$-embeddings $g_i:\CA_i\rightarrow_{\forall} \CB$ for $i\leq 2$ with $g_1\circ f_1=g_2\circ f_2$.  
    \end{itemize}
    Then $\CK$ is a \emph{universal Fra\"{i}ss\'{e} class} or for short a \emph{$\forall$-Fra\"{i}ss\'{e} class}.
    If in addition, $\CK$ satisfies
    \begin{itemize}
        \item (strong $\forall$-AP) if $\CA_0,\CA_1,\CA_2$ are in $\CK$ and $f_i:\bar a\rightarrow_{\forall} \CA_i$ for $i\leq 2$ are partial $\forall$-embeddings of some tuple $\bar a\in\CA_0$, then there are $\CB$ in $\CK$ and $\forall$-embeddings $g_i:\CA_i\rightarrow_{\forall} \CB$ for $i\leq 2$ with $g_1\circ f_1(\bar a)=g_2\circ f_2(\bar a)$,
    \end{itemize}
    then $\CK$ is a \emph{strong universal Fra\"{i}ss\'{e} class} or for short a \emph{strong $\forall$-Fra\"{i}ss\'{e} class}.
\end{definition}

\section{Counterexamples}

\subsection{$\forall$-homogeneity}

We first describe a free group $M$ of rank 4 that is not $\forall$-homogeneous and then a free group $M_3$ of rank 3 that is not $\forall$-homogeneous.

\medskip

\begin{example}\label{example-no-homog}
    Let
    $$
    L=\ang{a,b}
    \quad\le\quad
    L_1=L*\ang{x}
    \quad\le\quad
    L_2=\ang{L_1,t\mid[u,t]=1},
    $$
    where $x$ is a new letter and $u=x^2(bx^{2n})^m$. We construct $M$ as an amalgamated product $M_1*_{u=u^t}M_2$ living in $L_2$, where $M_1=\ang{a,b,x^2}$ and $M_2=\ang{b^t,x^t}$. Nielsen transformations show that $\set{a,bx^{2n},x^2(bx^{2n})^m}$ is a basis for $M_1$. Then
    $$
    M
    =M_1\underset{u=u^t}{*}M_2
    =\ang{a,bx^{2n},u}\underset{u=u^t}{*}\ang{b^t,x^t}
    =\ang{a,bx^{2n}}*\ang{b^t,x^t}.
    $$
    So $M$ is a free subgroup of $L_2$ containing $L$, and by Lemma~\ref{ex-closed-ice}, $L$ is existentially closed in $M$. But $L$ is not a free factor of $M$, since $b$ is not primitive in $M$.
\end{example}

Notice that $\tp_\forall^L(a,b)=\tp_\forall^M(a,b)$.  Also, the tuple $(a,bx^{2n})$ has the same $\forall$-type in $M$ as it does in $\langle a, bx^{2n}\rangle$, because this group is a free factor in $M$ and therefore, by Theorem~\ref{perin}, is elementarily embedded in $M$. Moreover, $L$ and $\langle a, bx^{2n}\rangle$ are isomorphic, so $\tp_\forall^M(a,b)=\tp_\forall^M(a,bx^{2n})$. But there is no automorphism of $M$ sending $(a,b)$ to $(a,bx^{2n})$ because $L$ is not a free factor, so $M$ is not $\forall$-homogeneous. 

Now let $M_3$ be the subgroup of $M$ generated by $bx^{2n}, b^t,x^t.$ Since $\tp_\forall^M(a,b)=\tp_\forall^M(a,bx^{2n})$, we have  $\tp_\forall^{M}(b)=\tp_\forall^{M}(bx^{2n}).$  Since $M_3$ is a non-abelian free factor of $M$ containing both $b$ and $bx^{2n}$, we have $\tp_\forall^{M_3}(b)=\tp_\forall^{M_3}(bx^{2n})$. But $bx^{2n}$ is primitive in $M_3$ and $b$ is not, so they cannot be in the same automorphic orbit. 

An alternative proof that $M$ is not $\forall$-homogeneous follows from Ould Houcine's characterization of $\forall$-homogeneity for finitely generated free groups:
\begin{proposition}\label{ould-houcine-characterization}
    \cite[Proposition~4.10]{Oul2011homogeneity}
    Let $F$ be a finitely generated free group. Then $F$ is $\forall$-homogeneous if and only if $F$ satisfies the following two conditions:
    \begin{enumerate}
        \item If a tuple $\bar a\in F$ is a power of a primitive element (i.e., there is a single primitive element $x\in F$ such that $\bar a\in\ang{x}$) and if $\tp^F_\forall(\bar a)=\tp^F_\forall(\bar b)$, then $\bar b$ is a power of a primitive element.
        \item Every existentially closed subgroup of $F$ is a free factor.
    \end{enumerate}
\end{proposition}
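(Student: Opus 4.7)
The plan is to prove both implications separately, with the forward direction following relatively directly from the characterization theorems and the backward direction requiring a case split and a structural analysis.

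For the forward direction, (1) is immediate from $\forall$-homogeneity together with the fact that automorphisms preserve primitivity. For (2), let $H\le F$ be existentially closed. I would first verify $\text{rk}(H)\le\text{rk}(F)$ by arguing that any $(\text{rk}(F)+1)$-tuple in $F$ must satisfy some nontrivial word relation while a basis of $H$ does not, contradicting existential closure; I would also note $\text{rk}(H)\ge 2$ since $F$ is non-abelian and the existential formula $\exists y\,(xy\ne yx)$ would otherwise fail to transfer. Now take a basis $\bar a$ of $H$ and a basis $\bar b$ of a rank-$\text{rk}(H)$ free factor $H'$ of $F$. Since $H'\prec F$ by Theorem~\ref{perin} and $H\cong H'$ as abstract free groups, we have
$$\tp_\forall^F(\bar a)=\tp_\forall^H(\bar a)=\tp_\forall^{H'}(\bar b)=\tp_\forall^F(\bar b),$$
so by $\forall$-homogeneity an automorphism carries $\bar a$ to $\bar b$ and therefore $H$ to $H'$, showing $H$ is a free factor.

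For the backward direction, fix $\bar a,\bar b\in F$ with $\tp_\forall^F(\bar a)=\tp_\forall^F(\bar b)$. If $\bar a=x^{\bar k}$ for a primitive $x$ with $\gcd(k_i)=1$, then by (1) $\bar b$ is also a power of a primitive element. The quantifier-free identities $a_i^{k_j}=a_j^{k_i}$ transfer to $\bar b$, and the universal formulas ruling out further roots force $\bar b=y^{\bar k}$ for a primitive $y$; any automorphism sending $x$ to $y$ finishes this case.

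In the remaining case I would work with the smallest free factor $F_a$ of $F$ containing $\bar a$, which is well-defined because finite intersections of free factors of a finitely generated free group are again free factors. By Theorem~\ref{perin}, $F_a\prec F$ and in particular $F_a$ is existentially closed, and minimality together with (2) identifies $F_a$ with the ``existential closure'' of $\ang{\bar a}$ inside $F$. Constructing $F_b$ analogously, elementarity gives $\tp_\forall^{F_a}(\bar a)=\tp_\forall^{F_b}(\bar b)$, and I would extract from this an isomorphism $F_a\to F_b$ sending $\bar a$ to $\bar b$, which then extends to an automorphism of $F$ via complementary free factors. The main obstacle will be this last step: converting the abstract $\forall$-type equality into a concrete isomorphism matching the tuples, particularly when $\bar a$ is not a basis of $F_a$ but generates a proper ex-closed subgroup of it. The cleanest route will likely combine Lemma~\ref{ex-closed-ice} and condition (2) to show that, up to automorphism of $F_a$, the tuple $\bar a$ is completely specified by its $\forall$-type inside the minimal free factor it generates.
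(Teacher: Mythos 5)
First, a point of comparison: the paper does not prove this proposition at all---it is imported verbatim from Ould Houcine \cite{Oul2011homogeneity}---so there is no in-paper argument to measure yours against, and I am judging the proposal on its own. Your overall architecture (easy forward direction via Theorem~\ref{perin} plus homogeneity; backward direction split into a primitive-power case and a minimal-free-factor case) has the right shape, but there are two genuine gaps. The first is in the forward direction of (2): your justification that $\mathrm{rk}(H)\le\mathrm{rk}(F)$, namely that ``any $(\mathrm{rk}(F)+1)$-tuple in $F$ must satisfy some nontrivial word relation,'' is false---a free group of rank $2$ already contains free subgroups of every finite rank, so arbitrarily long tuples can be relation-free. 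The correct route is through Lemma~\ref{ex-closed-ice}: if $H$ is existentially closed in $F$ (and finitely generated, which itself needs a word), then $F$ embeds over $H$ into an iterated centralizer extension $H_n$ of $H$, and composing with the retraction $H_n\to H$ that kills the stable letters exhibits $H$ as a retract, hence a quotient, of $F$; that is what bounds the rank. With this repair the rest of your forward argument (matching a basis of $H$ with a basis of a free factor of equal rank and invoking $\forall$-homogeneity) goes through.

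The second and more serious gap is the one you flag yourself: in the backward direction you reduce to extracting an isomorphism $F_a\to F_b$ carrying $\bar a$ to $\bar b$ from $\tp_\forall^{F_a}(\bar a)=\tp_\forall^{F_b}(\bar b)$, where $F_a$ is the smallest free factor containing $\bar a$. This step is the entire content of the hard direction, and your proposed route---showing that ``up to automorphism of $F_a$, the tuple $\bar a$ is completely specified by its $\forall$-type''---is dangerously close to circular: when $F_a=F$ (which happens for any tuple not contained in a proper free factor) that claim is literally the $\forall$-homogeneity of $F$ you are trying to prove. What equality of $\forall$-types gives you directly is only a pair of homomorphisms $F_a\to F_b$ and $F_b\to F_a$ matching the tuples (realize in $F_b$ the existential formula expressing $\bar b$ via the words $\bar w(\bar z)$ that write $\bar a$ in a basis of $F_a$, and conversely); the real work is in showing that minimality of $F_a$ and $F_b$ forces the composite endomorphism fixing $\bar a$ to be an automorphism, and neither condition (2) nor Lemma~\ref{ex-closed-ice} does this for you without a further substantive argument. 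Your Case 1 is essentially correct, though you cannot assume $\gcd(k_i)=1$ (consider $\bar a=(x^2)$); instead one pins the exponent vector down up to a global sign using the divisibility constraints imposed by universal root-exclusion formulas.
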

\noindent $L$ is an example of an existentially closed subgroup of $M$ that is not a free factor, hence $M$ is not $\forall$-homogeneous. 

\begin{theorem}
    Free groups of finite rank at least 3 or of countable rank are not $\forall$-homogeneous.
\end{theorem}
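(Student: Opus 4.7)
The plan is to leverage Example~\ref{example-no-homog} by embedding $M_3$ as a free factor of the target free group. Write the target as $F = M_3 * H$, where $H$ is free of rank $n - 3$ (trivial when $n = 3$, countably infinite when the target has countable rank). To avoid a notational clash with the rank, I denote the integer parameter from Example~\ref{example-no-homog} by $N$, so $u = x^2(bx^{2N})^m$. I will show that $b$ and $bx^{2N}$ witness the failure of $\forall$-homogeneity: they share a $\forall$-type in $F$ but lie in distinct $\Aut(F)$-orbits.

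For the common $\forall$-type, the key point is $M_3 \prec F$. For finite $n$ this is immediate from Theorem~\ref{perin}. For countable $n$, realize $F$ as the union of the ascending chain $F = \bigcup_k (M_3 * \langle x_1, \ldots, x_k\rangle)$; every inclusion in this chain is elementary by Theorem~\ref{perin}, so the Tarski--Vaught criterion for unions of elementary chains yields $M_3 \prec F$. Since the excerpt shows $\tp_\forall^{M_3}(b) = \tp_\forall^{M_3}(bx^{2N})$, elementarity transfers this to $F$.

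For the orbit inequality, $bx^{2N}$ is primitive in $F$ because $\{bx^{2N}, b^t, x^t\}$ is a basis of $M_3$, and concatenating any basis of $H$ yields a basis of $F$. To see $b$ is not primitive in $F$, I invoke the classical fact (due to Magnus/Whitehead) that for a nontrivial element $g$ of a finite-rank free group $G$, the quotient $G/\langle\langle g\rangle\rangle$ is free if and only if $g$ is primitive. The retraction $F = M_3 * H \to M_3$ killing $H$ gives $\langle\langle b\rangle\rangle_F \cap M_3 = \langle\langle b\rangle\rangle_{M_3}$, and hence
\[
F/\langle\langle b\rangle\rangle_F \;\cong\; (M_3/\langle\langle b\rangle\rangle_{M_3}) * H.
\]
The excerpt notes $b$ is not primitive in $M_3$, so by the Magnus/Whitehead criterion the factor $M_3/\langle\langle b\rangle\rangle_{M_3}$ is not free, whence its free product with the free group $H$ is not free either. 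Were $b$ primitive in $F$, then $F = \langle b\rangle * K$ for some $K$, giving $F/\langle\langle b\rangle\rangle_F \cong K$, which is free as a free factor of the free group $F$ -- contradiction.

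The main obstacle is ensuring that non-primitivity of $b$ propagates from $M_3$ to $F$; this is handled above via the Magnus/Whitehead criterion and the free-product decomposition of the quotient. The other ingredients (elementary chains, $\forall$-type transfer, primitivity bookkeeping inside a basis of a free factor) are routine. For finite $n \ge 4$ an alternative route avoiding Magnus would use Proposition~\ref{ould-houcine-characterization}(2): writing $F = M * F_{n-4}$, the subgroup $L = \langle a,b\rangle$ is existentially closed in $F$ by composition of existentially closed embeddings, and the transitivity of free-factor inclusion for finitely generated free groups ensures $L$ is still not a free factor of $F$ since it is not a free factor of $M$. This path, however, does not reach the countable-rank case, where the Magnus argument seems unavoidable.
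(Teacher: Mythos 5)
Your argument is correct, but it takes a genuinely different route from the paper's for every rank above $4$. The paper does not propagate a single witnessing pair of tuples: for finite rank greater than $4$ it embeds the rank-$4$ group $M$ as a free factor and invokes Proposition~\ref{ould-houcine-characterization}(2), showing via the Kurosh subgroup theorem that the existentially closed subgroup $L=\langle a,b\rangle$ cannot be a free factor of $F$ (else it would be one of $M$); for countable rank it instead takes the tuples from $M$, transfers their $\forall$-types along the elementary chain $F_2\prec F_3\prec M\prec F_5\prec\cdots$, and derives a contradiction from the Perin--Sklinos/Ould Houcine theorem that the finitely generated free group $M$ is homogeneous. Your proof is uniform across all cases: the pair $(b,bx^{2N})$ in $F=M_3*H$ shares a $\forall$-type by elementarity of the free factor $M_3$ (exactly as in the paper's treatment of $M_3$ itself), and the orbits are separated by primitivity, with non-primitivity of $b$ pushed from $M_3$ up to $F$ via the classical criterion that $G/\langle\langle g\rangle\rangle$ is free if and only if $g$ is primitive (for $G$ of finite rank), combined with the decomposition $F/\langle\langle b\rangle\rangle_F\cong(M_3/\langle\langle b\rangle\rangle_{M_3})*H$ and Nielsen--Schreier. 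This buys you independence from both Proposition~\ref{ould-houcine-characterization} (which is stated only for finitely generated free groups and so could not cover $F_\omega$ anyway) and the homogeneity theorem for $M$; the cost is importing the Magnus-type result on one-relator quotients, which the paper never needs. Both arguments are sound; yours is arguably more self-contained and uniform, while the paper's is shorter given the machinery it already has on hand.
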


\begin{proof}

    Let $L,M,M_3$ be as in Example~\ref{example-no-homog}. We have shown that $M_3$ and $M$ are free groups of ranks 3 and 4, respectively, and neither are $\forall$-homogeneous. Suppose $F$ is a finitely generated free group of rank greater than 4, and canonically embed $M$ into $F$. Since this embedding is elementary, $L$ is existentially closed in $F$. Suppose by way of contradiction that $F$ is $\forall$-homogeneous. Then by Proposition~\ref{ould-houcine-characterization}, $L$ is a free factor of $F$, say $F=L*K$. By Bass-Serre theory, we can write
    $$
    M=L*(L^{x_1}\cap M)*\cdots*(L^{x_p}\cap M)*(K^{y_1}\cap M)*\cdots*(K^{y_q}\cap M)*F',
    $$
    where $x_i,y_i\in F$ and $F'$ is some free group. But then $L$ is a free factor of $M$.

    Let $F_\omega$ be a free group of rank $\omega$, and embed $M$ in $F_\omega$ canonically. Choose tuples $\bar a,\bar b\in M$ such that $\tp_\forall^M(\bar a)=\tp_\forall^M(\bar b)$ but there is no automorphism of $M$ sending $\bar a$ to $\bar b$. It is a result from model theory (see for example \cite[Proposition~2.3.11]{Mar2002model}) that any structure in an elementary chain is an elementary substructure of the union of the chain. In particular, if we let $F_i$ denote the free group on $i$ generators, then $F_2\prec F_3\prec M\prec F_5\prec\cdots$ form an elementary chain, so $M\prec\bigcup_{i<\omega}F_i=F_\omega$. Then $\tp_\forall^{F_\omega}(\bar a)=\tp_\forall^{F_\omega}(\bar b)$. If $F_\omega$ were $\forall$-homogeneous, then there would be an automorphism of $F_\omega$ sending $\bar a$ to $\bar b$. Then $\tp^{F_\omega}(\bar a)=\tp^{F_\omega}(\bar b)$, so $\tp^M(\bar a)=\tp^M(\bar b)$. Since $M$ is homogeneous, there is an automorphism of $M$ carrying $\bar a$ to $\bar b$.
\end{proof}

\begin{corollary}
    The first-order theory of a non-abelian free group does not have quantifier elimination to boolean combinations of $\forall$-formulas.
\end{corollary}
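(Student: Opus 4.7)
The plan is to derive the corollary by contradiction from the theorem, using the known homogeneity of non-abelian free groups as the bridge between full types and $\forall$-types.

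First I would unpack what quantifier elimination down to boolean combinations of $\forall$-formulas means for types. If $T$ denotes the common first-order theory of non-abelian free groups and every formula $\phi(\bar x)$ is equivalent modulo $T$ to a boolean combination of universal formulas, then in any model $M \models T$ the full type $\tp^M(\bar a)$ is completely determined by the $\forall$-type $\tp^M_\forall(\bar a)$. Indeed, for each formula $\phi(\bar x)$, whether $M \models \phi(\bar a)$ is decided by checking a fixed boolean combination of $\forall$-formulas against $\bar a$, which reads off $\tp^M_\forall(\bar a)$.

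Next I would combine this with the homogeneity of non-abelian free groups established by Perin--Sklinos \cite{PerSkl2012homogeneity} and Ould Houcine \cite{Oul2011homogeneity}: for any non-abelian free group $M$ and tuples $\bar a, \bar b \in M$ of the same length, $\tp^M(\bar a) = \tp^M(\bar b)$ implies there is an automorphism of $M$ sending $\bar a$ to $\bar b$. Chaining the two implications, QE to boolean combinations of $\forall$-formulas would force the following: whenever $\tp^M_\forall(\bar a) = \tp^M_\forall(\bar b)$, the full types agree and so an automorphism of $M$ carries $\bar a$ to $\bar b$. That is, every non-abelian free group would be $\forall$-homogeneous.

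This contradicts the preceding theorem, which provides a non-abelian free group (for instance $M_3$ of rank $3$ from Example~\ref{example-no-homog}) that is not $\forall$-homogeneous. There is no real obstacle here; the argument is a short logical deduction, and the only point requiring any care is verifying that boolean combinations of $\forall$-formulas interact with types as claimed, which is immediate from the definition of QE modulo a theory.
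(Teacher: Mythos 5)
Your proof is correct and matches the paper's intended argument, which the authors leave implicit (the corollary is stated without proof as an immediate consequence of the non-$\forall$-homogeneity theorem combined with the homogeneity results of Perin--Sklinos and Ould Houcine). The one point needing care --- that agreement of $\forall$-types forces agreement on all boolean combinations of $\forall$-formulas --- is handled correctly.
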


\subsection{$\forall$-AP} 

In this section we will prove the following:
\begin{theorem}\label{th:free}
    The class of finitely generated free groups is not a $\forall$-Fra\"iss\'e class.
\end{theorem}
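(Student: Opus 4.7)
My plan is to assume, toward contradiction, that finitely generated non-abelian free groups form a $\forall$-Fra\"iss\'e class, and then derive a contradiction from the non-$\forall$-homogeneity of $F_\omega$ proved in the previous theorem. The easy axioms (IP), ($\forall$-HP), and ($\forall$-JEP) all hold: isomorphism closure is immediate; finitely generated subgroups of free groups are free by Nielsen--Schreier, so every finitely generated $\forall$-substructure of a class member is again in the class; and the free product $\CA_1 * \CA_2$ together with the canonical free-factor inclusions (elementary embeddings by Theorem~\ref{perin}, hence $\forall$-embeddings) witnesses ($\forall$-JEP). The content of the theorem is therefore to refute ($\forall$-AP).

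I would apply ($\forall$-AP) to the setup of Example~\ref{example-no-homog}, taking $\CA_0 = L$, $\CA_1 = \CA_2 = M$, and the two $\forall$-embeddings $f_1 = \iota : L \hookrightarrow M$ (a $\forall$-embedding because $L$ is existentially closed in $M$) and $f_2 : L \to M$ defined by $a \mapsto a$ and $b \mapsto bx^{2n}$ (a $\forall$-embedding because its image $\langle a, bx^{2n}\rangle$ is a non-abelian free factor of $M$, hence elementary by Theorem~\ref{perin}). Under this hypothesis ($\forall$-AP) yields a finitely generated free group $B$ and $\forall$-embeddings $g_1, g_2 : M \rightarrow_\forall B$ satisfying $g_1(a) = g_2(a)$ and $g_1(b) = g_2(bx^{2n})$. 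Iterating ($\forall$-AP) together with ($\forall$-JEP) via a standard back-and-forth construction then produces a countable $\forall$-Fra\"iss\'e limit $\mathcal{U}$: a countable locally free group whose age is the whole class, and which is $\forall$-homogeneous in the sense that any two tuples of the same $\forall$-type in $\mathcal{U}$ lie in the same $\Aut(\mathcal{U})$-orbit.

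The final step is to identify $\mathcal{U}$ with $F_\omega$. Since $F_\omega$ has the correct age (every finitely generated subgroup is free) and is the union of the elementary chain $F_2 \prec F_3 \prec \cdots$ with all inclusions $\forall$-embeddings, it is itself a candidate for the $\forall$-Fra\"iss\'e limit; uniqueness of the limit then yields $\mathcal{U} \cong F_\omega$. But the previous theorem shows $F_\omega$ is not $\forall$-homogeneous, contradicting the $\forall$-homogeneity of the Fra\"iss\'e limit. The main obstacle is justifying the identification $\mathcal{U} \cong F_\omega$, which requires verifying that every $\forall$-embedding between finitely generated free groups can be realized inside $F_\omega$; this should follow from Remark~\ref{remark-ice}, which lets one replace any centralizer extension arising in the construction by a free product with a free group, keeping the whole amalgamation inside a large enough $F_n \le F_\omega$. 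As a fallback route, one could instead try to derive a contradiction directly from the single amalgam $B$ above by running a Bass--Serre analysis on $g_1(L) = g_2(\langle a, bx^{2n}\rangle)$ inside $B$ (free factor of $g_2(M)$ but not of $g_1(M)$) analogous to the Bass--Serre step in the proof of the previous theorem, to force $L$ to be a free factor of $M$, contradicting the construction.
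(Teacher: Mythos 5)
Your reduction to refuting ($\forall$-AP) is right, and your choice of test configuration (two $\forall$-embeddings of $L=\ang{a,b}$ into $M$ from Example~\ref{example-no-homog}, one with image a free factor and one without) is in the spirit of the paper. But the route you take from there has a fatal gap: the identification of the putative $\forall$-Fra\"iss\'e limit $\mathcal U$ with $F_\omega$. Having the right age and being the union of a chain of class members with $\forall$-embeddings does \emph{not} characterize the Fra\"iss\'e limit; the limit is pinned down by the extension (richness) property, namely that every $\forall$-embedding of a finitely generated existentially closed substructure of $\mathcal U$ into a class member is realized over that substructure inside $\mathcal U$. And $F_\omega$ demonstrably fails this property, by the very Bass--Serre argument used in the proof that free groups are not $\forall$-homogeneous: if $M$ embedded into any free group $F$ so that its copy of $L$ landed on a free factor of $F$, then $L$ would be a free factor of $M$, which it is not. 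So under your contradiction hypothesis the limit $\mathcal U$ would be some countable, locally free, but possibly non-free group, and the non-$\forall$-homogeneity of $F_\omega$ says nothing about it. (A secondary caution: for a plain $\forall$-Fra\"iss\'e class the limit is only homogeneous with respect to finitely generated existentially closed substructures, not $\forall$-homogeneous for arbitrary tuples --- that stronger conclusion is what the \emph{strong} $\forall$-AP is for. Your particular tuples do generate existentially closed subgroups, so this issue is bridgeable here, but only after the unbridgeable identification $\mathcal U\cong F_\omega$.)

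Your fallback also does not close: after one application of ($\forall$-AP) you know $g_1(L)=g_2(\ang{a,bx^{2n}})$ is a free factor of $g_2(M)$ and existentially closed in $B$, but the Bass--Serre argument needs it to be a free factor of the ambient group $B$, which nothing guarantees. Indeed, the paper appears to avoid the symmetric amalgam of $M$ with itself precisely because it is not clear that it fails: in Proposition~\ref{AP} the authors instead build two \emph{different} groups $H$ and $K$ over $L$, with deliberately incompatible exponents ($8$ versus $7$, with $m,q$ even), and then show by a genuinely technical argument --- Ol'shanskii's disc-diagram bound for quadratic equations (Lemma~\ref{Ol}), the Lyndon--Sch\"utzenberger lemma (Lemma~\ref{lynschu}), and Lemma~\ref{imposs} --- that any free group amalgamating them would have to satisfy $\check x^8(\check b\check x^{8n})^m\check h^{-m}=\hat y^7(\hat b\hat y^{7p})^qk^{-q}$ in $K$, which is killed by an exponent-sum parity count. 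That combinatorial core is the actual content of Theorem~\ref{th:free}, and your proposal does not supply a substitute for it.
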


In the next example we double Example \ref{example-no-homog} to obtain two finitely generated free groups that cannot be amalgamated to satisfy $\forall$-AP. 

\medskip

\begin{example}
    Following Example~\ref{example-no-homog}, we let $L=\ang{a,b}$ be a common subgroup of $H$ and $K$, where
    \begin{align*}
        H&=\ang{a,h,\tilde b,\tilde x}
        &K&=\ang{a,k,\hat b,\hat y}\\
        b&\mapsto h\prn{\tilde x^8(\tilde b\tilde x^{8n})^{m}h^{-m}}^{-n}
        &b&\mapsto k\prn{\hat y^7(\hat b\hat y^{7p})^{q}k^{-q}}^{-p},
    \end{align*}
   $n,m,p,q$ are sufficiently large, and $m,q$ are even. Here, we change the exponents, $\tilde b$ plays the role of $b^t$ from Example~\ref{example-no-homog}, $\tilde x$ plays the role of $x^t$, $h$ plays the role of $bx^{8n}$, and the embedding of $b$ into $H$ is based on the equation $u=u^t$ from $M$. Similarly for $K$, with a new letter $y$ replacing $x$.
\end{example}

We require a few lemmas to show that $H$ and $K$ cannot be amalgamated over $L$ into a free group to satisfy the $\forall$-AP. The first follows from \cite[Lemma 4]{LynSch1962equation}.
\begin{lemma}\label{lynschu}
    Suppose $F$ is a finitely generated free group. Fix a basis of $F$, and let $u$ and $v$ be cyclically reduced words in that basis. Suppose $w$ is a subword of $u^{n_1}$ and a subword of $v^{n_2}$ where $n_1,n_2\in\Z$, and suppose that $|w|\ge|u|+|v|$. Then there exist $a_1,a_2$ cyclic shifts of one another such that $u=a_1^{k_1}$ and $v=a_2^{k_2}$ for some $k_1,k_2\in\Z$. In particular, $u$ commutes with a conjugate of $v$.
\end{lemma}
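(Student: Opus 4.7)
The plan is to translate the statement into one about periodic words and apply the Fine--Wilf theorem, which in this setting is essentially equivalent to the Lyndon--Schützenberger lemma the authors cite. Since $u$ and $v$ are cyclically reduced, no cancellation occurs when forming $u^{n_1}$ or $v^{n_2}$, so as reduced words these have periods $|u|$ and $|v|$ respectively. The subword $w$, lying inside each, inherits both periods at every index where the shift keeps both endpoints inside $w$; that is, $w$ has periods $|u|$ and $|v|$ simultaneously.

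By the Fine--Wilf theorem, a word with periods $p$ and $q$ whose length is at least $p + q - \gcd(p,q)$ has period $d := \gcd(p,q)$. The hypothesis $|w| \ge |u| + |v|$ is stronger than required, so $w$ agrees with a prefix of $c^\infty$ for some word $c$ of length $d$.

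To extract the cyclic-shift conclusion, let $w$ begin at position $i$ in $u^{n_1}$ and position $j$ in $v^{n_2}$. Since $|w| \ge |u|$ and $|w| \ge |v|$, the length-$|u|$ prefix of $w$ is some cyclic shift $a_1$ of $u$, and the length-$|v|$ prefix of $w$ is some cyclic shift $a_2$ of $v$; both are prefixes of $c^\infty$ of length divisible by $d$, hence literal powers of $c$. Undoing the cyclic shifts, $u$ and $v$ are powers of cyclic shifts of $c$, and these shifts are in particular cyclic shifts of one another, yielding the $a_1, a_2$ demanded by the statement with $u = a_1^{k_1}$ and $v = a_2^{k_2}$.

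For the final assertion, recall that in a free group cyclic shifts of a word are conjugates: writing $a_2 = g^{-1} a_1 g$ for a suitable $g \in F$, we have $g v g^{-1} = a_1^{k_2}$, which commutes with $u = a_1^{k_1}$, so $u$ commutes with the conjugate $g v g^{-1}$ of $v$. The only real obstacle is bookkeeping --- tracking the offsets $i, j$ together with the cyclic shifts and verifying the periods are inherited by $w$ --- rather than any genuine technical difficulty, which is presumably why the authors are content to cite Lyndon--Schützenberger directly.
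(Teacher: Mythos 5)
Your proof is correct. Note, though, that the paper does not actually prove this lemma: it simply states that it ``follows from [Lemma~4]'' of Lyndon--Sch\"utzenberger, so there is no in-paper argument to diverge from. What you have done is supply the missing combinatorial content, and your route through Fine--Wilf is the standard one: since $u$ and $v$ are cyclically reduced, $u^{n_1}$ and $v^{n_2}$ are literal concatenations with periods $|u|$ and $|v|$, a common factor $w$ inherits both periods, and $|w|\ge|u|+|v|\ge|u|+|v|-\gcd(|u|,|v|)$ meets the Fine--Wilf threshold, so $w$ has period $d=\gcd(|u|,|v|)$; the length-$|u|$ and length-$|v|$ prefixes of $w$ are then cyclic shifts of $u$ and $v$ respectively and simultaneously powers of the length-$d$ root $c$, which after undoing the shifts exhibits $u$ and $v$ as powers of cyclic shifts of $c$ (hence of each other), and the commutation statement follows since cyclic shifts of cyclically reduced words are conjugate. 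Fine--Wilf and the cited Lyndon--Sch\"utzenberger lemma are essentially interderivable, so this is the same fact dressed in word-combinatorics rather than free-group language. Two small points you glossed over but which cause no trouble: the exponents $n_1,n_2$ range over $\Z$, so for negative exponents one runs the argument with $u^{-1}$ (also cyclically reduced, same length) and absorbs the sign into $k_1$ or $k_2$; and the periods of $w$ are only meaningful because $|w|\ge|u|+|v|$ strictly exceeds each of $|u|$ and $|v|$ for nontrivial $u,v$.
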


\begin{definition}
    An equation $E$ is \emph{quadratic} if each variable in $E$ occurs exactly twice. Let $F(A)$ denote the free group with finite basis $A$, and let $(A\cup A^{-1})^*$ be the set of all words (not necessarily reduced) with alphabet $A$. A quadratic equation $E$ with variables $\{x_i,y_i,z_j\}$ and non-trivial coefficients $\{C_j,C\} \in F(A)$ is said to be in \emph{standard form} if its coefficients are expressed as freely and cyclically reduced words in $(A\cup A^{-1})^*$ and $E$ has either the form
\begin{equation}\label{eqn:orientable}
    \left( \prod_{i=1}^g [x_i,y_i]\right)\left(\prod_{j=1}^{m_\coef-1} z_j^\mo C_j z_j\right) C = 1 \quad\text{or}\quad
    \left(\prod_{i=1}^g [x_i,y_i]\right)C=1
\end{equation}
where $[x,y]=x^\mo y^\mo xy$, in which case we say it is \emph{orientable}, or it has the form
\begin{equation} \label{eqn:non-orientable}
    \left(\prod_{i=1}^g x_i^2\right) \left(\prod_{j=1}^{m_\coef-1} z_j^\mo C_j z_j\right) C = 1 \quad\text{or}\quad
    \left(\prod_{i=1}^g x_i^2\right)C=1
\end{equation}
in which case we say it is non-orientable. The \emph{genus} of a quadratic equation is the number $g$ in Equations~(\ref{eqn:orientable}) and (\ref{eqn:non-orientable}) and $m_\coef$ is the number of coefficients. If $g=0$ then we will define $E$ to be orientable. If $E$ is a quadratic equation we define its \emph{reduced Euler characteristic}, $\ol{\chi}$ as follows:
\[\ol{\chi}(E) =
    \begin{cases}
        2-2g &\text{if $E$ is orientable} \\
        2-g  &\text{if $E$ is not orientable}.
    \end{cases}
\]
\end{definition}

For example, $C_1u^{-1}C_2uv^{-1}C_3v=1$ is an orientable quadratic equation in standard form with variables $u,v$, coefficients $C_1,C_2,C_3$, genus $g=0$, and $m_\coef=3$. Similarly, $C_1v^{-1}C_2v=1$ is an orientable quadratic equation in standard form in a single variable $v$, with coefficients $C_1,C_2$, genus $g=0$, and $m_\coef=2$.

\begin{lemma}\label{Ol}
    \cite{Ols1989diagrams} or \cite[Theorem~4]{KhaVdo2012linear}
    Let $E$ be a quadratic equation in standard form over $F(A)$. If either $g=0$ and $m_\coef=2$, or $E$ is non-orientable and $g=m_\coef=1$, then we set $N=1$. Otherwise we set $N=3(m_\coef-\ol{\chi}(E))$. If $E$ has a solution, then for some $n \leq N$,
  \begin{itemize}
  \item[(i)] there is a set $P = \{p_1,\ldots p_n\}$ of variables and a collection of discs $D_1,\ldots, D_{m_\coef}$ such that
  \item[(ii)] the boundaries of these discs are circular 1-complexes with directed and labelled edges such that each edge has a label in $P$ and each $p_j \in P$ occurs exactly twice in the union of boundaries;
  \item[(iii)] if we glue the discs together by edges with the same label, respecting the edge orientations, then we will have a collection $\Sigma_0,\ldots,\Sigma_l$ of closed surfaces and the following inequalities: if $E$ is orientable then each $\Sigma_i$ is orientable and \[\biggl(\sum_{i=0}^{l} \chi(\Sigma_i)\biggr) - 2l \geq \ol{\chi}(E);\] if $E$ is non-orientable either at least one $\Sigma_i$ is non-orientable and \[\biggl(\sum_{i=0}^{l} \chi(\Sigma_i)\biggr) - 2l \geq \ol{\chi}(E)\] or, each $\Sigma_i$ is orientable and \[\biggl(\sum_{i=0}^{l} \chi(\Sigma_i)\biggr) - 2l \geq \ol{\chi}(E) +2 ;\]
  \item[(iv)] there is a mapping $\ol{\psi}:P \rightarrow (A\cup A^\mo)^*$ such that upon substitution, the coefficients $C_1,\ldots,C_{m_\coef}$ can be read without cancellations around the boundaries of $D_1,\ldots,D_{m_\coef}$, respectively.
  \end{itemize}
\end{lemma}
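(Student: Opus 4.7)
The plan is to exploit the classical bijection between quadratic equations in standard form and compact surfaces: the $i$-th block $[x_i,y_i]$ (respectively $x_i^2$) is a handle (respectively crosscap), each term $z_j\iv C_j z_j$ is a boundary component labelled by the cyclic word $C_j$, and the constant $C$ closes off the last boundary. A solution to $E=1$ over $F(A)$ is exactly a continuous map from the associated punctured surface $S_E$ into the rose $R_A$ for $A$, sending the $j$-th boundary to the conjugacy class of $C_j$. Everything in Lemma~\ref{Ol} should fall out of picking a \emph{minimal} such map and decoding it combinatorially.

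First, I would lift a given solution to a van Kampen--style disc diagram $\Delta$ whose cyclic boundary label is the substituted equation, with interior edges labelled by letters of $A\cup A^\mo$. Since each variable appears exactly twice in $E$, the boundary of $\Delta$ is partitioned into paired arcs connected by dual arcs in the interior. Maximal such dual arcs, together with their edge-words, form the candidate set $P$; cutting $\Delta$ along them yields the discs $D_1,\ldots,D_{m_{\coef}}$, each reading its coefficient $C_j$ without cancellation around $\bdy D_j$ by construction (item (iv)) and having the remaining boundary segments labelled in $P$, with each $p\in P$ occurring exactly twice because variables do (item (ii)). The map $\ol\psi\colon P\to(A\cup A^\mo)^*$ is just the labelling inherited from $\Delta$.

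Re-gluing the $D_j$ along matched $P$-arcs rebuilds a (possibly disconnected) closed surface $\Sigma_0\sqcup\cdots\sqcup\Sigma_l$, orientable precisely when $E$ is orientable, and additivity of $\chi$ together with the $m_{\coef}$ capped boundaries gives $\sum_i\chi(\Sigma_i)-2l\ge\ol\chi(E)$, which is item (iii); the $+2$ correction in the non-orientable-to-orientable crossover case records the well-known loss when a Möbius band is replaced by a disc.

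The main obstacle is the bound $|P|\le N=3(m_{\coef}-\ol\chi(E))$, or $N=1$ in the two exceptional cases. Here I would replace $\Delta$ by a minimal diagram via Nielsen moves on the $x_i,y_i,z_j$ together with folding of freely reducible pairs, then argue that every surviving $P$-arc is essential on the glued surface: each such arc either splits a handle/crosscap or separates a coefficient cell, so the total count is controlled by the standard linear bound on simple arc systems on a surface of reduced Euler characteristic $\ol\chi(E)$ with $m_{\coef}$ punctures. The two degenerate cases collapse because the minimal diagram then admits a unique essential arc. Justifying this minimality/counting step rigorously — in particular, ruling out hidden inflation of the $P$-arc system after reductions — is the delicate part, and is precisely where the disc-diagram machinery of Olshanskii, as sharpened by Kharlampovich--Vdovina to extract the explicit linear constant $3(m_{\coef}-\ol\chi(E))$, is invoked.
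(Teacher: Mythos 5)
The paper itself gives no proof of this lemma: it is imported as a black box from Olshanskii \cite{Ols1989diagrams} and Kharlampovich--Vdovina \cite[Theorem~4]{KhaVdo2012linear}, so there is no ``paper's proof'' to match your argument against. Your sketch does reproduce the right circle of ideas from those sources --- the dictionary between quadratic equations in standard form and punctured surfaces, the passage from a solution to a disc diagram, cutting along dual arcs to produce the coefficient discs $D_1,\dots,D_{m_\coef}$, and regluing to read off the Euler characteristic inequality. As an outline of the intended strategy it is faithful.

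But as a proof it has a genuine gap exactly at the statement's quantitative content. The bound $n\le N=3(m_\coef-\ol{\chi}(E))$ (and $N=1$ in the two exceptional cases) is the whole point of the lemma --- it is what makes Lemma~\ref{imposs} and Proposition~\ref{AP} work, since those arguments enumerate the possible labelings $P=\{p_1,p_2\}$ or $\{p_1,p_2,p_3\}$ --- and your sketch explicitly defers it (``is precisely where the disc-diagram machinery of Olshanskii \dots is invoked''). Nothing in your construction shows that the arc system can be reduced to at most $N$ maximal arcs; a priori the dual arcs in $\Delta$ could be numerous, and the assertion that ``each $p\in P$ occurs exactly twice because variables do'' conflates the $P$-arcs with the original variables $x_i,y_i,z_j$, which they are not in bijection with. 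The minimality argument (that every surviving arc is essential, plus the count of disjoint essential arcs on a surface with $\ol{\chi}(E)$ and $m_\coef$ boundary components) is the entire technical substance, and asserting that it ``is controlled by the standard linear bound'' without carrying it out leaves the bound unproved. A smaller inaccuracy: you claim the glued surface is ``orientable precisely when $E$ is orientable,'' but item (iii) explicitly allows a non-orientable $E$ to produce all-orientable $\Sigma_i$ at the cost of the $+2$ correction, so that dichotomy is part of what must be proved rather than assumed. Given that the paper cites this result rather than proving it, the honest conclusion is that your write-up is a plausible reconstruction of the cited proof's architecture, not an independent proof.
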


\begin{lemma} \label{imposs}
    Suppose $a,b,c$ are reduced words in a finitely generated free group $F$ and $a$ is cyclically reduced. Let $m\ge 9$ and $j\in\set{7,8}$. If $|a^mb^mc^j|<|a|$, then one of $a,b,c$ commutes with a conjugate of another or an inverse of another.
\end{lemma}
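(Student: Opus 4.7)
My plan is to study the non-crossing matching of cancelled letters in the free reduction of $a^{m}\cdot b^{m}\cdot c^{j}$, extract the three cancellation amounts across the two joints, and then invoke Lemma~\ref{lynschu} on whichever of them is longest. I will first carry out the argument assuming $b$ and $c$ are cyclically reduced, and then indicate the modifications needed in general.

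Assume $b,c$ are cyclically reduced and set $w=a^{m}b^{m}c^{j}$, so that $|w|<|a|$. Partition the cancelled letters of the product $a^{m}\cdot b^{m}\cdot c^{j}$ into three families according to which two factors contain the endpoints of each pair: let $p_{1}$ be the number of pairs with one endpoint in $a^{m}$ and one in $b^{m}$; $p_{2}$, the number with one endpoint in $b^{m}$ and one in $c^{j}$; and $q$, the number with one endpoint in $a^{m}$ and one in $c^{j}$ (bridging over $b^{m}$). Let $t_{a},s,t_{c}$ count the survivors contributed to $w$ by $a^{m},b^{m},c^{j}$ respectively. From bookkeeping,
\[
p_{1}+q+t_{a}=m|a|,\quad p_{1}+p_{2}+s=m|b|,\quad q+p_{2}+t_{c}=j|c|,\quad t_{a}+s+t_{c}=|w|<|a|,
\]
and summing yields
\[
2(p_{1}+p_{2}+q)=m|a|+m|b|+j|c|-|w|>(m-1)|a|+m|b|+j|c|.
\]

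Now suppose for contradiction that no pair among $a,b,c$ satisfies the commutation asserted in the conclusion. The first $p_{1}$ letters of $b^{m}$ form a common subword of $b^{m}$ and $a^{-m}$, so Lemma~\ref{lynschu} forces $p_{1}<|a|+|b|$; analogously $p_{2}<|b|+|c|$ and $q<|a|+|c|$. Doubling the sum of these upper bounds and comparing to the lower bound above yields $(m-5)|a|+(m-4)|b|+(j-4)|c|<0$. For $m\ge 9$ and $j\ge 7$ this becomes $4|a|+5|b|+3|c|<0$, a contradiction. Hence at least one of $p_{1},p_{2},q$ meets its Lemma~\ref{lynschu} threshold, producing the desired commutation (with cyclic shifts and inverses absorbing the ``conjugate of another or inverse of another'' clause of the conclusion).

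The main obstacle is handling $b$ or $c$ that is not cyclically reduced. Write $b=\beta b_{0}\beta^{-1}$ and $c=\gamma c_{0}\gamma^{-1}$ with $b_{0},c_{0}$ cyclically reduced. Since Lemma~\ref{lynschu} requires cyclically reduced arguments, one must first strip off the conjugating parts $\beta,\gamma$ from any candidate common subword before invoking it on $(a,b_{0})$, $(b_{0},c_{0})$, or $(a,c_{0})$; this inflates the thresholds to $|a|+|b_{0}|+|\beta|$, $|b_{0}|+|c_{0}|+|\beta|+|\gamma|$, and $|a|+|c_{0}|+|\gamma|$ respectively, while the reduced lengths shrink to $|b^{m}|=m|b_{0}|+2|\beta|$ and $|c^{j}|=j|c_{0}|+2|\gamma|$. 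A direct rerun of the previous inequalities leaves a residual bounding $|\beta|+|\gamma|$ from below by roughly $2|a|+\tfrac{5}{2}|b_{0}|+\tfrac{3}{2}|c_{0}|$, and closing this for large conjugating parts requires the additional structural observation that any cancellation of $\beta$ against $a^{\pm m}$ of length $\ge k|a|$ forces $\beta$ to begin with $a^{-k}$ (and similarly for $\gamma$). This reduces $|\beta|,|\gamma|$ modulo $|a|$ into a range absorbable by the slack from $m\ge 9$ and $j\ge 7$.
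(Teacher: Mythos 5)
Your argument in the case where $b$ and $c$ are cyclically reduced is correct, and it is a genuinely more elementary route than the paper's: the paper sets up the quadratic equation $d^{-1}a^mu^{-1}b_0^{m}uv^{-1}c_0^jv=1$ in the conjugating variables $u,v$ and invokes Lemma~\ref{Ol} to get disc diagrams whose boundary labels can be estimated via Lemma~\ref{lynschu}, whereas you get the same length contradiction ($8|a|+9|b|+7|c|<4|a|+4|b|+4|c|$) directly from the non-crossing cancellation bookkeeping. But the lemma must be proved for arbitrary reduced $b,c$ (indeed, in the paper's application inside Proposition~\ref{AP} the words playing the roles of $b$ and $c$ are not cyclically reduced), and this is exactly where your proof has a genuine gap. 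Writing $b=\beta b_0\beta^{-1}$, $c=\gamma c_0\gamma^{-1}$, your own accounting shows that the inequality no longer closes: you are left needing to rule out $2(|\beta|+|\gamma|)>4|a|+5|b_0|+3|c_0|$, and the ``structural observation'' you offer --- that a cancellation of $\beta$ against $a^{m}$ of length at least $k|a|$ forces $\beta$ to begin with $a^{-k}$ --- does not bound $|\beta|$ or $|\gamma|$ at all, nor does it produce a commutation relation. For example, $\beta$ may be an arbitrarily long word bearing no relation to $a$, $b_0$, or $c_0$, with most of the cancellation at the two joints occurring between $\beta^{\pm1}$ and $\gamma^{\pm1}$ themselves (e.g.\ $\gamma$ sharing a long prefix with $\beta$); such cancellations contribute fully to $p_1,p_2,q$ but yield no common subword of powers of the cyclically reduced cores, so Lemma~\ref{lynschu} gives you nothing there. ``Reducing $|\beta|,|\gamma|$ modulo $|a|$'' is not a step that can be carried out: knowing $\beta$ starts with a power of $a^{-1}$ still leaves $b=\beta b_0\beta^{-1}$ with $|\beta|$ unbounded, and you would need either an inductive rewriting of the whole product or a genuinely new estimate to continue.

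This is precisely the difficulty the paper's heavier machinery is designed to absorb: in Lemma~\ref{Ol} the conjugators become the \emph{variables} of the quadratic equation rather than parts of the coefficients, the disc boundaries carry only the cyclically reduced coefficients $d^{-1}a^m$, $b_0^m$, $c_0^j$ read without cancellation, and so $|\beta|,|\gamma|$ never enter the length count. To salvage your approach you would need to supply the missing analysis of how $\beta^{\pm1}$ and $\gamma^{\pm1}$ can interact with each other and with $a^m$, $b_0^{m}$, $c_0^{j}$ at the two joints, which is a substantial piece of work, not a routine modification.
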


\begin{proof}
Let $d=a^mb^mc^j$, and let $b=u^{-1}b_0u$ and $c=v^{-1}c_0v$, where $b_0$ and $c_0$ are cyclically reduced. Then we have
$$
d^{-1}a^mu^{-1}b_0^{m}uv^{-1}c_0^j v=1,
$$
which must have a solution in $F$. We apply Lemma~\ref{Ol} to this equation in variables $u,v$. Here, $N=m_\coef=3$. So there are three discs $D_1,D_2,D_3$ with the words $d^{-1}a^m,b^m,c^j$ on the boundaries, and there are two possibilities for $P$, namely, $P=\set{p_1,p_2}$ or $P=\set{p_1,p_2,p_3}$.

Suppose $P=\set{p_1,p_2}$. If any disc is labeled by $p_ip_i$ for some $i$, then that disc is nonorientable, contradicting Lemma~\ref{Ol}(iii). So the only possibility for labeling the boundaries of the discs (up to reordering the discs) is that $p_1$ labels the entirety of $\bdy D_1$, $p_2$ labels the entirety of $\bdy D_2$, and $p_1p_2$ labels $\bdy D_2$.
Suppose $\ol\psi$ in Lemma~\ref{Ol}(iv) sends $p_1$ to a cyclic permutation of $d^{-1}a^m$, $p_2$ to a cyclic permutation of $b_0^m$, and $p_1p_2$ to a cyclic permutation of $c_0^j$.

If there is no cancellation in the $ad^{-1}a$ segment of the cyclic word $d^{-1}a^m$, then $p_1=p_{11}d^{-1}p_{12}$, where $p_{11}=a_1a^{m_1}$, $p_{12}=a^{m_2}a_2$, $m_1+m_2=m-1$, and $a_2a_1=a$.  If there is cancellation in the $ad^{-1}a$ segment, then we can rewrite the cyclic word $d^{-1}a^m$ as $d'a_0^{m-1}a_0'$ where $d'$ is a subword of $d^{-1}$, $a_0$ is a cyclic permutation of $a$, and $a_0'$ is an initial segment of $a_0$. Then $p_1=p_{11}d'p_{12}$, where $p_{11}=a_1a_0^{m_1}a_0'$, $p_{12}=a_0^{m_2}a_2$, $m_1+m_2=m-2$, and $a_2a_1=a_0$.  In either case, $p_{11}$ and $p_{12}$ are subwords of $a^m$, as well as subwords of $c_0^j$. Also, $|p_1|<|p_{11}|+|p_{12}|+|a|$.

By Lemma~\ref{lynschu}, either $a$ commutes with a conjugate of $c_0$, in which case we're done, or both $|p_{11}|<|a|+|c_0|$ and $|p_{12}|<|a|+|c_0|$. Similarly, either $b_0$ commutes with a conjugate of $c_0$ or $|p_2|<|b_0|+|c_0|$. If neither $b_0$ nor $a$ commutes with a conjugate of $c_0$, then 
\begin{align*}
|p_{11}|+|p_{12}|+|p_2| &< 2|a|+|b_0|+3|c_0|\\
|p_1|+|p_2|&< 3|a|+|b_0|+3|c_0|.
\end{align*}
But this contradicts
$$
(m-1)|a|+m|b_0|+j|c_0| < |\bdy D_1|+|\bdy D_2|+|\bdy D_3| = 2|p_1|+2|p_2|,
$$
since $|\bdy D_1|+|\bdy D_2|+|\bdy D_3| = 2|p_1|+2|p_2|$.
Any other choices of $\ol\psi$ are analogous.

Suppose $P=\set{p_1,p_2,p_3}$ and $\bdy D_1$ is labeled by $p_2p_3$, $\bdy D_2$ by $p_1p_3$, and $\bdy D_3$ by $p_1p_2$. Suppose $\ol\psi$ sends $p_1p_2$ to a cyclic permutation of $d^{-1}a^m$, $p_2p_3$ to a cyclic permutation of $b_0^m$, and $p_1p_3$ to a cyclic permutation of $c_0^j$. If $p_1$ covers $ad^{-1}a$, then let $p_1=p_{11}d_1p_{12}$ where $p_{11}$ and $p_{12}$ are  subwords of $a^m$ and $|p_1|<|p_{11}|+|p_{12}|+|a|$. Again, by Lemma~\ref{lynschu}, we have either
\begin{itemize}
    \item $a$ commutes with a conjugate of $b_0$, $b_0$ commutes with a conjugate of $c_0$, or $a$ commutes with a conjugate of $c_0$, in which case we're done; or
    \item $|p_{11}|, |p_{12}|<|a|+|c_0|$ and $|p_2|<|a|+|b_0|$ and $|p_3|<|b_0|+|c_0|$.
\end{itemize}
If the latter, then 
$$
|p_1|+|p_2|+|p_3|<|p_{11}|+|p_{12}|+|a|+|p_2|+|p_3| < 4|a|+2|b|+3|c_0|,
$$
which contradicts
$$
(m-1)|a|+m|b|+j|c_0| < |\bdy D_1|+|\bdy D_2|+|\bdy D_3| = 2|p_1|+2|p_2|+2|p_3|.
$$
Any other labelings of the boundaries or choices of $\ol\psi$ are similar.
\end{proof} 

\begin{proposition} \label{AP}
    There is no finitely generated free group satisfying the $\forall$-AP with respect to $L$, $H$, and $K$.
\end{proposition}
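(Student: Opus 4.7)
My plan is the following. Assume for contradiction that there is a finitely generated free group $F$ with $\forall$-embeddings $g_H\colon H\to F$ and $g_K\colon K\to F$ agreeing on $L$. Identifying $H$ and $K$ with their images in $F$, the element $b\in L$ admits two expressions, yielding in $F$ the equation
\[
h\,w_H^{-n}=k\,w_K^{-p},
\]
where $w_H=\tilde x^8(\tilde b\tilde x^{8n})^m h^{-m}$ and $w_K=\hat y^7(\hat b\hat y^{7p})^q k^{-q}$. Rearranging, $w_H^n=b^{-1}h$ and $w_K^p=b^{-1}k$ in $F$, so the reduced length of $w_H^n$ in $F$ is at most $|b|_F+|h|_F$, and analogously for $w_K^p$.

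The heart of the argument is to feed a cyclic permutation of $w_H$ (or $w_K$) into Lemma~\ref{imposs}. Observe that $W:=\tilde x^{-8}w_H\tilde x^8=(\tilde b\tilde x^{8n})^m h^{-m}\tilde x^8$ has exactly the form $\alpha^m\beta^m\gamma^j$ required by Lemma~\ref{imposs}, with $\alpha=\tilde b\tilde x^{8n}$, $\beta=h^{-1}$, $\gamma=\tilde x$, $j=8$; the analogous cyclic permutation of $w_K$ has $j=7$, which is exactly why the lemma is stated for $j\in\set{7,8}$. After cyclically reducing $\alpha$ in $F$ if necessary and keeping track of the conjugator, once the length inequality $|W|_F<|\alpha|_F$ is verified, Lemma~\ref{imposs} forces one of $\set{\tilde b\tilde x^{8n},h,\tilde x}$ to commute with a conjugate of another (possibly inverted) element of the same triple inside $F$.

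To extract a contradiction from such a commutation, I would use that $g_H$ is a $\forall$-embedding, so $H$ is existentially closed in $F$. Commutation-with-a-conjugate is an $\exists$-statement with parameters in $H$, so it must already hold in $H$. But $H$ is free of rank $4$, and in any free group the centralizer of a primitive element is the cyclic subgroup it generates: $C_H(\tilde b\tilde x^{8n})=\ang{\tilde b\tilde x^{8n}}$, $C_H(h)=\ang{h}$, $C_H(\tilde x)=\ang{\tilde x}$. Hence any of the possible commutations in $H$ would force, for example, a conjugate of $h$ in $H$ to be a power of $\tilde b\tilde x^{8n}$. This is ruled out by comparing cyclically reduced lengths in $H$: the cyclically reduced form of $h^l$ has length $|l|$, whereas $(\tilde b\tilde x^{8n})^l$ has cyclically reduced length $|l|(1+8n)$, and analogously when $\tilde x$ replaces $h$. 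A symmetric argument on the $K$-side rules out the remaining cases.

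The main obstacle is verifying the length inequality $|W|_F<|\alpha|_F$ uniformly in $F$. Writing $w_H=\pi v_H\pi^{-1}$ with $v_H$ cyclically reduced in $F$ and using $w_H^n=b^{-1}h$ gives $n|v_H|_F\le|b|_F+|h|_F$, so the cyclic reduction of $w_H$ in $F$ is tiny relative to $n$. The symmetric bound for $w_K$ lets one absorb the dependence on $|b|_F$ through the shared element $b$. Choosing $n,m,p,q$ sufficiently large (with $m,q$ even so that $W$ lands in the orientable case of the quadratic-equation analysis of Lemma~\ref{Ol} underlying Lemma~\ref{imposs}) should close the gap; the delicate part is controlling cancellations when passing from $w_H$ to the cyclic permutation $W$ and showing that any serious cancellation inside $\tilde b\tilde x^{8n}$ (which would shrink $|\alpha|_F$) is itself incompatible with $H$ being $\forall$-embedded of rank $4$ in $F$.
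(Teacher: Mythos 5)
Your setup is right, and you have correctly identified the roles of existential closedness (transferring commutation statements back to $H$ or $K$) and of Lemma~\ref{imposs}, but the central step of your plan cannot be carried out, and the reason is structural. The only equation you feed into your length analysis is $w_H^{\,n}=b^{-1}h$. This is not a consequence of the amalgamation: it is the \emph{definition} of $b$ inside $H$, an identity that already holds in the free group $H$ itself. Consequently it cannot force the inequality $|W|_F<|\alpha|_F$ that Lemma~\ref{imposs} needs -- if it could, the same argument would produce a forbidden commutation inside $H$ and prove that $H$ is not free. Concretely, the bound you extract is $n|v_H|_F\le|b|_F+|h|_F$ for the cyclic reduction $v_H$ of $w_H$, but $|b|_F$ is not controlled (in $H$ with its standard basis, $|b|$ is itself of order $n|w_H|$, so the inequality is vacuous), and in any case $|W|_F$ is a specific conjugate of $w_H$, not its cyclic reduction. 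Your proposal never actually uses the interaction between the $H$-side and $K$-side words, which is where all the content of the amalgamation lies.

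The paper's proof instead treats the full relation $h\,w_H^{-n}=k\,w_K^{-p}$ as a genus-zero quadratic equation with two coefficients in the single conjugating variable $v$ (Equation~(\ref{elf})) and applies Lemma~\ref{Ol} with $N=1$, $m_{\mathrm{coef}}=2$: the conclusion is that $h\bigl(h^m(\tilde x^{-8n}\tilde b^{-1})^{m}\tilde x^{-8}\bigr)^n$ must be a cyclic permutation of $\bigl(\bar y^7(\bar b\bar y^{7p})^{q}\bar k^{-q}\bigr)^{p}\bar k^{-1}$. Lemma~\ref{imposs} enters only in an auxiliary role, to rule out $|h|>|h^m(\tilde x^{-8n}\tilde b^{-1})^{m}\tilde x^{-8}|$ (and similarly for $\bar k$), so that Lemma~\ref{lynschu} can be applied to conclude that $h^m(\tilde x^{-8n}\tilde b^{-1})^{m}\tilde x^{-8}$ is conjugate to $\bar y^7(\bar b\bar y^{7p})^{q}\bar k^{-q}$. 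The final contradiction is then an existential sentence pushed into $K$ and killed by an exponent-sum parity count: with $m,q$ even, the exponent sum of $e_1$ on the left of $x^8y^{m}z^{-m}=e_1^7e_2^{q}e_3^{-q}$ is even while the right side requires $7$. This also corrects a misattribution in your sketch: the evenness of $m,q$ and the choice of exponents $7$ versus $8$ are there for this parity argument (and for distinguishing the two sides), not to place the equation in an orientable case of Lemma~\ref{Ol} -- all the quadratic equations involved have genus $0$ and are orientable by definition.
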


\begin{proof}
Suppose there is such a free group $F$, i.e., suppose $H$ and $K$ $\forall$-embed into $F$ such that the embedding of $L$ along $H\hookrightarrow F$ equals the embedding of $L$ along $K\hookrightarrow F$. Then $F$ must be a quotient of 
\begin{align*}
G
&=H\underset{L}*K\\
&=\ang{a_G,h_G,\tilde b_G,\tilde x_G,k_G,\hat b_G,\hat y_G \;\bigg|\; h_G\prn{\tilde x_G^8(\tilde b_G\tilde x_G^{8n})^{m}h_G^{-m}}^{-n}=k_G\prn{\hat y_G^7(\hat b_G\hat y_G^{7p})^{q}k_G^{-q}}^{-p}},
\end{align*}
where the copies of generators of $H$ and $K$ in $G$ are denoted with subscripts. Denote the images in $F$ of the generators in $G$ without the subscripts, e.g., the image of $h_G\in G$ under the quotient map is a word $h\in F$. The embeddings of $H$ and $K$ into $F$ must commute with their embeddings into $G$ composed with the quotient map, e.g., $h\in H$ is mapped to $h\in F$. So we treat $H$ and $K$ as subgroups of $F$.

Then in $F$ we have the equation
$$
h\prn{h^m(\tilde x^{-8n}\tilde b^{-1})^{m}\tilde x^{-8}}^n\prn{\hat y^7(\hat b\hat y^{7p})^{q}k^{-q}}^{p}k^{-1}=1.
$$
Considering $\tilde b,\tilde x,h$ as coefficients (i.e., parameters) in $H$, we have for all $s>1$,
$$ 
H\models\forall z\prn{\tilde x^8(\tilde b\tilde x^{8n})^{m}h^{-m}\neq z^s}.
$$
Since $H$ is existentially closed in $F$, the same sentence holds in $F$, i.e., $\tilde x^8(\tilde b\tilde x^{8n})^{m}h^{-m}$ is not a proper power in $F$. Similarly $\hat y^7(\hat b\hat y^{7p})^{q}k^{-q},a,h,k,\tilde b,\tilde x,\hat b,\hat y$ are not proper powers in $F$. 

We can assume (changing the values of  $\tilde x, \tilde b, h, \hat b, \hat y, k$ by conjugation, if necessary) that the reduced word in $F$ obtained from $h^m(\tilde x^{-8n}\tilde b^{-1})^{m}\tilde x^{-8}$ is cyclically reduced. We can also assume that the reduced form of $(\hat y^7(\hat b\hat y^{7p})^{q}k^{-q})^pk^{-1}$ is $v^{-1}(\bar y^7(\bar b\bar y^{7p})^{q}\bar k^{-q})^p\bar k^{-1}v$ for some $v$, where $\bar y,\bar b,\bar k$ are conjugates of $\hat y,\hat b,\hat k$ by $v$ and the reduced word obtained from $\bar y^7(\bar b\bar y^{7p})^{q}\bar k^{-q}$ is cyclically reduced.
Finally, we can assume that $h$ and $\bar k$ are cyclically reduced, by changing $\tilde x,\tilde b,\bar y,\bar b,v$ if necessary.

Now we have 
\begin{equation}\label{elf}
  h\prn{h^m(\tilde x^{-8n}\tilde b^{-1})^{m}\tilde x^{-8}}^nv^{-1}\Big(\bar y^7(\bar b\bar y^{7p})^{q}\bar k^{-q}\Big)^{p}\bar k^{-1}v=1.
\end{equation} 
We apply Lemma~\ref{Ol} to this equation in variable $v$. Here, $N=1$ and $m_\coef=2$, so we have a single variable $p_1$ that is the label of both discs $D_1$ and $D_2$. So $p_1$ equals a cyclic permutation of $h\big(h^m(\tilde x^{-8n}\tilde b^{-1})^{m}\tilde x^{-8}\big)^n$ and also equals a cyclic permutation of $\prn{\bar y^7(\bar b\bar y^{7p})^{q}\bar k^{-q}}^{p}\bar k^{-1}$ or its inverse. So, without loss of generality, $h\big(h^m(\tilde x^{-8n}\tilde b^{-1})^{m}\tilde x^{-8}\big)^n$ is a cyclic permutation of $\prn{\bar y^7(\bar b\bar y^{7p})^{q}\bar k^{-q}}^{p}\bar k^{-1}$. Then we can write $hw_1=w_2\bar k^{-1}w_3$, where $w_1=\big(h^m(\tilde x^{-8n}\tilde b^{-1})^{m}\tilde x^{-8}\big)^n$ and $w_2,w_3$ are subwords of $\prn{\bar y^7(\bar b\bar y^{7p})^{q}\bar k^{-q}}^{p}$.

Note that if $|h|>|h^m(\tilde x^{-8n}\tilde b^{-1})^{m}\tilde x^{-8}|$, then by Lemma~\ref{imposs}, one of $h,\tilde x^{-8n}\tilde b^{-1},\tilde x$ commutes with a conjugate of another or an inverse of another. But if $F\models\exists z[h_1,h_2^z]=1$ for $h_1,h_2\in\{h^{\pm1},(\tilde x^{-4n}\tilde b^{-1})^{\pm1},\tilde x^{\pm1}\}$, then $H\models\exists z[h_1,h_2^z]=1$, contradicting the fact that $H$ is freely generated by $a,\tilde x,\tilde b,h$. So $|h|\le|h^m(\tilde x^{-8n}\tilde b^{-1})^{m}\tilde x^{-8}|$ and, similarly, $|\bar k|\le|{\bar y^7(\bar b\bar y^{7p})^{q}\bar k^{-q}}|$.

If $w_3\ne1$, then $w_3$ is a common subword of $\big(h^m(\tilde x^{-8n}\tilde b^{-1})^{m}\tilde x^{-8}\big)^n$ and $\prn{\bar y^7(\bar b\bar y^{7p})^{q}\bar k^{-q}}^{p}$. Otherwise, we have $hw_1=w_2\bar k^{-1}$. Since $|h|\le|h^m(\tilde x^{-8n}\tilde b^{-1})^{m}\tilde x^{-8}|$ and $|\bar k|\le|{\bar y^7(\bar b\bar y^{7p})^{q}\bar k^{-q}}|$, we can choose $w_1',w_2'$ such that $hw_1=hw_1'\bar k^{-1}$ and $w_2\bar k^{-1}=hw_2'\bar k^{-1}$. Then $w_1'=w_2'$, i.e., $\big(h^m(\tilde x^{-8n}\tilde b^{-1})^{m}\tilde x^{-8}\big)^n$ and $\prn{\bar y^7(\bar b\bar y^{7p})^{q}\bar k^{-q}}^{p}$ have a common subword.

Therefore by Lemma~\ref{lynschu}, $h^m(\tilde x^{-8n}\tilde b^{-1})^{m}\tilde x^{-8}$ must equal a conjugate of ${\bar y^7(\bar b\bar y^{7p})^{q}\bar k^{-q}}$.
Then
$$
K\models\exists\check x,\check b,\check h\prn{\check x^8(\check b\check x^{8n})^{m}\check h^{-m}=\hat y^7(\hat b\hat y^{7p})^{q} k^{-q}}.
$$ 
But this equation does not have a solution in $K$, since in general, a free group $F(e_1,e_2,e_3)$ cannot have a solution to the equation $x^8y^{m}z^{-m}=e_1^7e_2^{q}e_3^{-q}$ where $m,q$ are even, because it is impossible for the exponential sum of $e_1$ in the left-hand side to be 7.
 \end{proof}

Now Theorem \ref{th:free} follows from Proposition \ref{AP}.

The proof of Proposition \ref{AP}  can also be extended to finitely generated elementary free groups, i.e., groups that model the common theory of non-abelian free groups.

\begin{theorem}
    The class of finitely generated elementary free groups is not a $\forall$-Fra\"iss\'e class.
\end{theorem}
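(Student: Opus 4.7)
My plan is to repeat the proof of Proposition~\ref{AP} almost verbatim, with the only change being that the putative amalgam $F$ is now a finitely generated elementary free group rather than a free group. The groups $L$, $H$, $K$ from the Example are themselves elementary free (every free group of rank at least $2$ is elementarily equivalent to $F_2$), so they still lie in the new class and the same example can again witness the failure of $\forall$-AP.

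First I would set up the reduction as before: assuming $H$ and $K$ $\forall$-embed into a finitely generated elementary free group $F$ agreeing on $L$, I form $G=H*_L K$ and push forward to $F$ to obtain equation~(\ref{elf}). All the non-proper-power and non-commutation facts used in the original proof transfer unchanged, because they only rely on $H$ and $K$ being existentially closed in $F$, not on $F$ itself being free. The cyclic reduction normalizations of $h$, $\bar k$, and the various coefficients also go through without modification.

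The main obstacle, and the only step that really uses the free-ness of the ambient group in the original proof, is the application of Lemma~\ref{Ol} (and the supporting Lemmas~\ref{lynschu} and~\ref{imposs}) to equation~(\ref{elf}) inside $F$. Two routes are available to adapt it. One is to observe that finitely generated elementary free groups are hyperbolic towers over $F_2$ and hence torsion-free hyperbolic; the Olshanskii--van Kampen disc diagram argument underlying Lemma~\ref{Ol} extends to this setting, since a solution of a quadratic equation with coefficients in $F$ still produces discs whose boundary labels glue to surfaces satisfying the Euler-characteristic inequalities of Lemma~\ref{Ol}(iii). The other is to use that $F$ is fully residually free: the Makanin--Razborov / Kharlampovich--Myasnikov description of solutions to quadratic equations over limit groups yields the same structural conclusion, and the length estimates in Lemmas~\ref{lynschu} and~\ref{imposs} can be recovered by discriminating the finitely many parameters of the argument to a free quotient of $F$ that preserves non-proper-powerness and non-commutation, and applying the estimates there.

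Once Lemma~\ref{Ol} is available inside $F$, the remainder of the argument runs identically to the proof of Proposition~\ref{AP}: one deduces that $h^m(\tilde x^{-8n}\tilde b^{-1})^m \tilde x^{-8}$ is conjugate in $F$ to $\bar y^7(\bar b \bar y^{7p})^q \bar k^{-q}$, transfers the existential statement
$$
K\models\exists\check x,\check b,\check h\;\bigl(\check x^8(\check b\check x^{8n})^m\check h^{-m}=\hat y^7(\hat b\hat y^{7p})^q k^{-q}\bigr)
$$
into $K$ via existential closure of $K$ in $F$, and derives the same contradiction from the exponent-sum parity obstruction in the free group $K$. The hard part is the extension of the quadratic-equation machinery; once that is settled, everything else is mechanical.
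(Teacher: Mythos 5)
Your reduction and your final transfer step are fine, but the core of your proposal --- re-running the quadratic-equation machinery of Lemmas~\ref{lynschu}, \ref{Ol}, and~\ref{imposs} inside the elementary free group $F$ itself --- has a genuine gap, and neither of your two routes closes it. Lemma~\ref{Ol} and the length estimates in Lemmas~\ref{lynschu} and~\ref{imposs} are statements about reduced words in a fixed free basis (``coefficients read without cancellation around disc boundaries,'' exact inequalities like $|w|\ge|u|+|v|$); in a torsion-free hyperbolic tower there is no such normal form, and any coarse analogue would carry constants depending on the hyperbolicity constant, so the argument is not ``mechanical'' and you have not supplied it. The residually-free route fails for a different reason: a discriminating homomorphism $\phi:F\to F_{\mathrm{free}}$ preserves finitely many quantifier-free inequalities, but the conditions you need to preserve (``$w$ is not a proper power,'' ``$x$ commutes with no conjugate of $y$'') are universal, not quantifier-free; and even granting the conclusion of Proposition~\ref{AP} in the quotient $\phi(F)$, the resulting existential statement lives in $\phi(F)$ with parameters $\phi(\bar y),\phi(\bar b),\phi(\bar k)$, and there is no reason the composite $K\to F\to\phi(F)$ is existentially closed, so you cannot pull it back into $K$ to run the exponent-sum contradiction.

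The paper avoids all of this with a cleaner idea: it observes that the entire content of the proof of Proposition~\ref{AP} can be packaged as a single \emph{parameter-free} $\forall\exists$-sentence, namely ``for all values of $h,\tilde x,\tilde b,\bar k,\bar y,\bar b,v$ satisfying Equation~(\ref{elf}), some element of $S_1$, $S_2$, or $S_3$ commutes with a conjugate of another,'' which is true in every non-abelian free group precisely by the combinatorial argument of Proposition~\ref{AP}. Since a finitely generated elementary free group $E$ is elementarily equivalent to a free group, $E$ models this sentence; plugging in the images of the generators of $H$ and $K$ under the $\forall$-embeddings then yields a forbidden commutation, which transfers back into $H$ or $K$ by existential closedness and gives the same contradictions as before. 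So the free-group-specific machinery is only ever applied inside an actual free group, and the transfer to $E$ is purely model-theoretic. If you want to salvage your write-up, replace the attempted extension of Lemma~\ref{Ol} with this elementary-equivalence transfer.
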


\begin{proof}
    Let $L,H,K$ be as in the proof of Proposition~\ref{AP}.  Suppose an elementary free group $E$ satisfies the $\forall$-AP with respect to $L,H,K$. The proof of Proposition~\ref{AP} shows that any free group $F$ models the following $\forall\exists$-sentence without parameters: For any values of $h,\tilde x,\tilde b,\bar k,\bar y,\bar b,v$ that solve Equation~(\ref{elf}), there exists $u\in F$ such that $[x^u,y]=1$ for some $x,y\in S_1$ or $x,y\in S_2$ or $x,y\in S_3$, where
    $$
    S_1=\set{h,\tilde x,\tilde b\tilde x^{8n}},\quad
    S_2=\set{\bar k, \bar y, \bar b\bar y^{7p}},\quad
    S_3=\set{h^m(\tilde x^{-8n}\tilde b^{-1})^{m}\tilde x^{-8}, \prn{\bar y^7(\bar b\bar y^{7p})^{q}\bar k^{-q}}}
    $$
    and $x\ne y$. Then $E$ models the same sentence. However, plugging in the words $h,\tilde x,\tilde b,\bar k,\bar y,\bar b\in E$ given by the $\forall$-embeddings of $H$ and $K$ into $E$ results the same contradictions as in Proposition~\ref{AP}.
\end{proof}

\subsection{Strong $\forall$-AP}

We again modify Example~\ref{example-no-homog}, this time to show that non-abelian limit groups do not form a strong $\forall$-Fra\"iss\'e class. 
\medskip

\begin{example}
    Let $L_0=\ang{b,x}$ and $u_1=x^2(bx^{2n})^m$. Define a single centralizer extension $L_1=\ang{L_0,t_1\mid[u_1,t_1]=1}$ with a subgroup $H=\langle h,\tilde b,\tilde x\rangle$ where $\tilde b=b^{t_1}$, $\tilde x=x^{t_1}$, and $h=bx^{2n}$. Note $H$ contains both $b$ and $x^4$ as 
    $$
    b=h\prn{\tilde x^2(\tilde b\tilde x^{2n})^mh^{-m}}^{-n}
    \qquad
    x^4=\prn{\tilde x^2(\tilde b\tilde x^{2n})^mh^{-m}}^2.
    $$
    Analogously, let $u_2=x^4(bx^{4p})^q$ and define another centralizer extension of $L_0$ as $L_2=\ang{L_0,t_2\mid[u_2,t_2]=1}$. Let $K=\langle k,\hat b,\hat x\rangle$ where $\hat b=b^{t_2}$, $\hat x=x^{t_2}$, and $k=bx^{4p}$. $K$ contains both $b$ and $x^4$ as 
    $$
    b=k\prn{\hat x^4(\hat b\hat x^{4p})^qk^{-q}}^{-p}
    \qquad
    x^4=\hat x^4(\hat b\hat x^{4p})^qk^{-q}.
    $$
  
    Note that the inclusions of the tuple $(b,x^4)$ into $H$ and $K$ are partial $\forall$-embeddings. Indeed, if $\phi$ is a quantifier-free formula and $L_0\models\forall\bar y\,\phi(\bar y,b,x^4)$, then by Lemma~\ref{ex-closed-ice}, we have $L_1\models\forall\bar y\,\phi(\bar y,b,x^4)$. Since $H\le L_1$, we have $H\models\forall\bar y\,\phi(\bar y,b,x^4)$. Similarly for $K$.

    Let
    \begin{align*}
    G
    &=H\underset{\ang{b,x^4}}*K\\
    &=\ang{h,\tilde b,\tilde x,k,\hat b,\hat x \ \Bigg|
        \begin{array}{rl}
            h\big(\tilde x^2(\tilde b\tilde x^{2n})^mh^{-m}\big)^{-n}&=k\big(\hat x^4(\hat b\hat x^{4p})^qk^{-q}\big)^{-p}\\
            \big(\tilde x^2(\tilde b\tilde x^{2n})^mh^{-m}\big)^2&=\hat x^4(\hat b\hat x^{4p})^qk^{-q}
        \end{array}}.
    \end{align*}
    Suppose $M$ is a limit group satisfying the strong $\forall$-AP with respect to $L_0,H,K$ and the tuple $(b,x^4)$. Then $M$ must be a quotient of $G$. From the second relation in $G$, we have
    $M\models\exists u\prn{u^2=\hat x^4(\hat b\hat x^{4p})^qk^{-q}}$.
    So $K$ models the same sentence, which is a contradiction.
\end{example}

\begin{theorem}
    The class of non-abelian limit groups is not a strong $\forall$-Fra\"iss\'e class.
\end{theorem}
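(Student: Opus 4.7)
The plan is to derive the theorem directly from the preceding example, which already carries out the main argument.

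First I would assume toward contradiction that non-abelian limit groups form a strong $\forall$-Fra\"iss\'e class, and apply strong $\forall$-AP to the data from the example: $\CA_0 = L_0$, $\CA_1 = H$, $\CA_2 = K$, and the tuple $\bar a = (b, x^4)$. All three groups are non-abelian limit groups, since $L_0$ is free of rank $2$ while $H$ and $K$ are finitely generated subgroups of the centralizer extensions $L_1$ and $L_2$; and the example has already verified via Lemma~\ref{ex-closed-ice} that the inclusions of $\bar a$ into $H$ and $K$ are partial $\forall$-embeddings. Strong $\forall$-AP would then produce a non-abelian limit group $M$ together with $\forall$-embeddings $g_1 \colon H \to M$ and $g_2 \colon K \to M$ that agree on $(b, x^4)$.

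Next I would invoke the universal property of the amalgamated product $G = H *_{\langle b, x^4\rangle} K$ to obtain a homomorphism $G \to M$. Translating the second defining relation of $G$ gives the identity $\bigl(\tilde x^2(\tilde b\tilde x^{2n})^m h^{-m}\bigr)^2 = \hat x^4(\hat b\hat x^{4p})^q k^{-q}$ in $M$, which exhibits the right-hand side as a square in $M$. Since $g_2$ is a $\forall$-embedding, $K$ is existentially closed in its image, so the existential sentence $\exists u\,(u^2 = \hat x^4(\hat b\hat x^{4p})^q k^{-q})$ must already hold in $K$. Choosing $q$ odd, however, rules this out: in the free group $K = \langle k, \hat b, \hat x\rangle$ the exponent sum of the basis element $k$ in $\hat x^4(\hat b\hat x^{4p})^q k^{-q}$ equals $-q$, which is odd, while any square in a free group has even exponent sum on every basis generator.

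The only (very minor) obstacle is checking that taking $q$ odd is compatible with the rest of the example's setup; but since the construction only asks that $n, m, p, q$ be positive (and sufficiently large for the words involved to be cyclically reduced) with no parity constraint imposed, this choice is free, and the contradiction established in the example yields the theorem.
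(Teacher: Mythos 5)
Your proof is correct and follows essentially the same route as the paper: apply strong $\forall$-AP to $L_0,H,K$ and the tuple $(b,x^4)$, factor the resulting maps through $G=H*_{\langle b,x^4\rangle}K$, and pull the existential sentence $\exists u\,(u^2=\hat x^4(\hat b\hat x^{4p})^qk^{-q})$ back to $K$ along the $\forall$-embedding. Your one addition — fixing $q$ odd so that the exponent sum of $k$ (and of $\hat b$) in $\hat x^4(\hat b\hat x^{4p})^qk^{-q}$ is odd, whence the element cannot be a square in the free group $K=\langle k,\hat b,\hat x\rangle$ — is a legitimate and welcome filling-in of the final contradiction, which the paper leaves implicit.
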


\subsection{Finite iterated centralizer extensions and free factors}

We prove a result of independent interest, characterizing free factors of free groups in terms of a restricted kind of finite iterated centralizer extensions. We will use a theorem from Wilton \cite[Theorem 18]{Wil2012oneended}, for which we slightly correct the formulation:

\begin{lemma}\label{wilton}
     Let $\Gamma$ be a graph of groups with infinite cyclic edge groups and a finitely generated fundamental group $L$. Suppose every vertex group has rank at least 2 or, if it is cyclic, then the vertex has exactly one incident edge and the inclusion map of the edge group into that vertex is an isomorphism. Then $L$ is one-ended if and only if every vertex group in $\Gamma$ is freely indecomposable relative to the incident edge groups.
\end{lemma}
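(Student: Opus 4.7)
I would attack both directions separately, with the forward direction being a refinement argument and the backward direction being an action-on-a-tree argument.

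\textbf{Forward direction (contrapositive).} Suppose some vertex group $G_v$ admits a non-trivial free product decomposition $G_v=A*B$ with every incident edge group conjugate into $A$ or $B$. I would refine $\Gamma$ at $v$ by blowing up $v$ into two vertices carrying $A$ and $B$, joined by a single new edge with trivial edge group, and reattaching each incident edge to whichever new vertex contains (a conjugate of) its edge group. The resulting graph of groups $\Gamma'$ satisfies $\pi_1(\Gamma')=L$ and contains an edge with trivial edge group, so collapsing all other edges exhibits $L$ as either a non-trivial free product or as an HNN extension with trivial associated subgroups. Either way $L$ splits over the trivial group, so by Stallings' ends theorem $L$ has more than one end. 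The hypothesis on cyclic vertex groups is used here only to ensure the refinement is non-degenerate (e.g., one does not accidentally refine at a vertex that was really just an edge).

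\textbf{Backward direction.} Assume for contradiction that $L$ is not one-ended. The intended applications make $L$ torsion-free (the vertex groups in context are torsion-free; I would either state this as an implicit assumption or reduce to that case). Stallings' theorem then provides a non-trivial free product decomposition $L=L_1*L_2$, equivalently an action of $L$ on a simplicial tree $T'$ with trivial edge stabilizers and no global fixed point. My goal is to produce a vertex group of $\Gamma$ that is freely decomposable relative to its incident edge groups.

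For each vertex group $G_v$, restrict the action to $T'$. If $G_v$ fixes no vertex of $T'$, the action gives a non-trivial splitting of $G_v$ with trivial edge stabilizers, i.e., a free product decomposition. To contradict relative free indecomposability I must ensure this decomposition is relative to the incident edge groups of $v$: each incident $E_e$, being cyclic, is either elliptic in $T'$ (and hence conjugate into a factor of the induced splitting of $G_v$) or loxodromic. \emph{The loxodromic case is the main obstacle.} I would handle it by replacing $T'$ by a common refinement of $T'$ and the Bass--Serre tree $T$ of $\Gamma$ (for instance via Guirardel's core), using the cyclicity of edge groups and the hypothesis on cyclic vertex groups to arrange that every edge group becomes elliptic in the refined tree while the splitting remains non-trivial with trivial edge stabilizers. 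Once every $E_e$ is elliptic, the induced free splitting of any non-fixing $G_v$ is genuinely relative to the incident edge groups, giving the desired contradiction.

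The remaining case is that every vertex group of $\Gamma$ fixes a vertex of $T'$. Each edge group $E_e$ is non-trivial and sits inside both of its incident vertex groups, so it fixes the vertices fixed by those two vertex groups; because $T'$ has trivial edge stabilizers and $E_e$ is non-trivial, these fixed vertices must coincide. Traversing $\Gamma$ propagates this equality along every edge, forcing a single vertex of $T'$ to be fixed by every vertex group and hence by all of $L$, contradicting the non-triviality of the action. Therefore $L$ is one-ended.
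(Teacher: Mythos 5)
The paper itself does not prove this lemma: it is quoted (in a ``slightly corrected formulation'') from Wilton, so there is no internal argument to compare yours against. On its own merits, your easy direction is fine: blowing up a relatively decomposable vertex into two vertices joined by an edge with trivial edge group exhibits a nontrivial free splitting of $L$ (amalgam or HNN over the trivial group), and Stallings' theorem then gives more than one end.

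The hard direction, however, has a genuine gap exactly at the point you flag as ``the main obstacle,'' and the repair you propose cannot work. If an incident edge group $E_e=\langle c\rangle$ is loxodromic in the free-splitting tree $T'$, no refinement of $T'$ (via Guirardel's core or otherwise) will make it elliptic: refining a tree only enlarges the set of loxodromic elements, and collapsing sacrifices nontriviality. Worse, there are infinite cyclic subgroups that are loxodromic in \emph{every} nontrivial free splitting of the ambient group --- e.g.\ $\langle[a,b]\rangle\le F_2$ is not conjugate into any proper free factor --- so no clever choice of $T'$ avoids the issue. This case is not a technicality; it is the entire content of the theorem, which generalizes Shenitzer's result that an amalgam of free groups over a cyclic subgroup is free only if the amalgamating element lies in a proper free factor on one side. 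The known proofs handle it with folding sequences (Stallings, Bestvina--Feighn, Dunwoody) or Whitehead-graph arguments, not by making everything elliptic. As written, your sketch establishes the easy implication plus only the sub-case of the hard implication in which all edge groups happen to be elliptic in some nontrivial free splitting. (A smaller quibble with your final paragraph: $L$ is generated by the vertex groups together with the stable letters, so ``fixed by every vertex group and hence by all of $L$'' needs the additional observation that each stable letter conjugates one nontrivial elliptic copy of an edge group to another and therefore must itself fix the common point; that is easily added.)
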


Note that free groups are not one-ended (they have infinitely many ends).

\begin{proposition}
    Let $L<M$ be free groups. Then $L$ is a free factor of $M$ if and only if $M$ embeds in $L_n=\ang{L,t_1,\ldots,t_n\mid[c_i,t_i]=1}$, where $c_1,\ldots,c_n\in L$ are primitive and distinct, such that the embedding of $L$ along $M\hookrightarrow L_n$ equals $L$.
\end{proposition}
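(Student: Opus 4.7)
For the forward direction, I apply Remark~\ref{remark-ice}. Suppose $L$ is a free factor of $M$, so $M=L*K$ with $K$ free. Pick any primitive $u\in L$ (exists since $L$ is non-abelian free). The Remark's embedding of $L*K$ into $\langle L,t\mid[u,t]=1\rangle$ sends $L$ identically and each generator $x_i$ of $K$ to $t^igt^igt^i$ for $g\in L\setminus\langle u\rangle$. Hence $M\hookrightarrow L_1$ of the desired form with $n=1$, $c_1=u$ primitive, and the distinctness condition vacuous.

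For the reverse direction, I realize $L_n$ as a star graph of groups $\Gamma$ with central vertex group $L$, $n$ leaf vertex groups $\langle c_i,t_i\rangle\cong\mathbb{Z}^2$, and cyclic edge groups $\langle c_i\rangle$. Let $T$ be the associated Bass--Serre tree. The action of $M$ on $T$ induces a graph of groups $\Gamma_M$ for $M$. The main vertex $v_0\in\Gamma_M$ (the $M$-orbit of the identity $L$-vertex of $T$) has stabilizer $M\cap L=L$, with exactly $n$ incident edges (one per type $i$, since $L$ acts transitively on type-$i$ edges at the identity $L$-vertex via cosets $L/\langle c_i\rangle$), each carrying edge group $\langle c_i\rangle$. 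The other endpoint of the $i$-th edge is a $\mathbb{Z}^2_i$-type vertex $w_i$ with $M$-stabilizer $M\cap\langle c_i,t_i\rangle=\langle c_i\rangle$: because $M$ free forces its abelian subgroups to be cyclic, and the unique cyclic subgroup of $\langle c_i,t_i\rangle\cong\mathbb{Z}^2$ containing the primitive lattice vector $c_i$ is $\langle c_i\rangle$.

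At each $w_i$, vertex and incident edge groups coincide, so I apply the standard Bass--Serre slide-and-collapse moves: slide the remaining edges at $w_i$ to originate at $v_0$, then collapse $w_i$. The ``off-center'' $L$-type vertices of $\Gamma_M$ (corresponding to $M$-orbits of $L$-vertices $gL$ with $g\notin M$) have stabilizers contained in $L\cap t_i^kLt_i^{-k}=\langle c_i\rangle$ (by the HNN normal form, valid because the primitivity of $c_i$ in $L$ makes $\langle c_i\rangle$ the associated subgroup of the HNN extension structure); equality of stabilizers with incident edge groups similarly holds since $c_i\in M$. These vertices too can be collapsed. After all such moves, $\Gamma_M$ reduces to $v_0$ (with group $L$) connected only by trivial edges (or not at all) to the remaining structure, which represents a free group $K$, yielding $M=L*K$.

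Main obstacle: controlling the possibly-infinite structure of $\Gamma_M$ (the vertex $w_i$ has infinitely many incident edges whenever no nontrivial power $t_i^k$ lies in $M$) and verifying the iterated slide-and-collapse terminates with $L$ isolated. Wilton's Lemma~\ref{wilton} supplies the key structural control: since $M$ is free and hence not one-ended, at each stage some vertex group in the simplified $\Gamma_M$ must be freely decomposable relative to its incident edges, furnishing the reductions needed. The primitivity assumption on $c_i$ is used essentially both to identify vertex stabilizers as $\langle c_i\rangle$ (via the maximal-cyclic and HNN arguments) and to realize $\langle c_i\rangle$ as a free factor of $L$, which is what allows the Wilton-style refinements to leave $L$ intact. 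A cleaner alternative route uses the natural retraction $\pi:L_n\to L$ sending each $t_i\mapsto 1$ (well-defined since $[c_i,1]=1$), whose restriction $\pi|_M:M\to L$ is a retraction; then $L$ is a free factor of $M$ by the classical fact that retracts of free groups are free factors.
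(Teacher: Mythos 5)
Your forward direction is correct and coincides with the paper's (a direct application of Remark~\ref{remark-ice}). The problems are in the reverse direction, and the most serious one is your ``cleaner alternative route'': it rests on the claim that retracts of free groups are free factors, and that claim is \emph{false}. For example, $w=aba^{-1}b^{-1}a$ generates a retract of $F(a,b)$ (the endomorphism $a\mapsto w$, $b\mapsto 1$ is idempotent with image $\langle w\rangle$ and fixes $w$), yet $w$ is not primitive, so $\langle w\rangle$ is not a free factor. Note also that your retraction $t_i\mapsto 1$ never uses the primitivity of the $c_i$, nor that there is only a single level of extensions, so if the argument were valid it would prove that every existentially closed subgroup of a free group is a free factor --- contradicting Example~\ref{example-no-homog} of this paper. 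Indeed, in that example the homomorphism $L_2\to L$ killing $t$ and $x$ restricts to a retraction of $M$ onto $L=\langle a,b\rangle$, and $L$ is not a free factor of $M$. So no argument along these lines can work.

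Your main route is in the same spirit as the paper's proof (act on the Bass--Serre tree of a splitting of $L_n$, study the induced graph of groups for $M$, and invoke Wilton's theorem), and the local computations you carry out are fine: the star-shaped splitting $L*_{\langle c_i\rangle}(\langle c_i\rangle\times\langle t_i\rangle)$ is a legitimate alternative to the paper's one-vertex-with-loops description, and the identification $M\cap(\langle c_i\rangle\times\langle t_i\rangle)=\langle c_i\rangle$ via cyclicity of abelian subgroups of $M$ is correct. But you stop exactly at the hard step: you concede that the induced graph of groups may be infinite and that you have not shown the slide-and-collapse process terminates with $L$ split off, and you only gesture at Lemma~\ref{wilton} as ``supplying the key structural control.'' That control is precisely what the paper's proof supplies: an induction on the rank of $M$, an application of Lemma~\ref{wilton} to locate a vertex group that is freely decomposable relative to its incident edge groups, separate treatment of the cases where that vertex is or is not (conjugate to) $L$, and, in the case where it is $L=A*B$, an analysis of how the conjugates of the $\langle c_i\rangle$ distribute between $A$ and $B$ that disconnects the graph and contradicts the freeness of $M$. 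As written, your proposal identifies the right tools but does not contain a proof of the reverse direction.
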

\begin{proof}
    ($\Rightarrow$) This follows immediately from Remark~\ref{remark-ice}.

($\Leftarrow$) Consider $L_n$ as the fundamental group of a graph of groups with a single vertex group $L$ and $n$ loops. $M$ acts on the corresponding Bass-Serre tree, inducing a graph of groups $\Gamma$ with fundamental group $M$. We will use induction on the rank of $M$. Consequently, we need only consider the connected component of $\Gamma$ containing $L$.

By Lemma~\ref{wilton}, there is a vertex group $G_v$ in $\Gamma$ that is freely decomposable relative to its incident edge groups. If $G_v\ne L$ and $v$ is a cut-point of $\Gamma$, then we apply the induction hypothesis to the connected component containing $L$. If $G_v\ne L$ and $v$ is not a cut-point, replace $v$ with two vertices, one for each factor of $G_v$, and an edge with trivial edge group $G_e$ between them. Then $M$ is a free product of $\pi_1(\Gamma-e)$ and the stable letter corresponding to $G_e$, so we apply the induction hypothesis to $\pi_1(\Gamma-e)$, which has lower rank.
 
Suppose $G_v=L$ and no other vertex group is freely decomposable relative to its edge groups. Let $L=A*B$ and suppose $c_1,\ldots,c_k$ are conjugate into $A$ and $c_{k+1},\ldots,c_n$ are conjugate into $B$. Consider another vertex group $L^x\cap M$, where $x\in L_n$. By Bass-Serre theory, we have
$$L^x\cap M=A_0^x*\cdots*A_p^x*B_0^x*\cdots*B_q^x*F,$$
where $A_j$ is conjugate into $A$ for all $j$, $B_j$ is conjugate into $B$ for all $j$, and $F$ is free. Let $A'=A_0^x*\cdots*A_p^x$ and let $B'=B_0^x*\cdots*B_q^x$. We claim that either $L^x\cap M=A'$ or $L^x\cap M=B'$. If $F$ is nontrivial then $L^x\cap M$ is freely decomposable with all incident edge groups conjugate into $A'*B'$, contradicting our assumption. The situation is similar if there are no incident edge groups conjugate into $A'$ or no edge groups conjugate into $B'$. Suppose both $A'$ and $B'$ contain conjugates of edge groups. Every edge group is of the form $\ang{c_i^{x_i}}$ for some $x_i\in L_n$. If $i\le k$, $c_i$ is conjugate into $A$, so $c_i^{x_i}$ must be conjugate into $A'$. Otherwise $c_i^{x_i}$ is conjugate into $B'$. So $L^x\cap M$ is freely decomposable relative to its edge groups, contradicting our assumption.

Therefore, every vertex group other than $L$ is either of the form $A_0^x*\cdots*A_p^x$ or of the form $B_0^x*\cdots*B_q^x$. Furthermore, since $c_i$ is conjugate into $A$ if and only if $i\le k$, there is no edge connecting a vertex of the form $A_0^x*\cdots*A_p^x$ to one of the form $B_0^x*\cdots*B_q^x$. So removing the trivial edge between $A$ and $B$ disconnects $\Gamma$ into two components. Then $M$ is the free product of freely indecomposable groups and hence not free.
\end{proof}

\section{Countable elementary free groups}

Finitely generated elementary free groups were described by Kharlampovich and Myasnikov as regular NTQ groups and Sela as hyperbolic fully residually free towers (see \cite{KhaMyaSkl2020fraisse}). In this section, we will consider some examples of countable non-finitely generated elementary free groups and will give a negative answer to a question in \cite{KhaMyaSkl2020fraisse}: Are all countable elementary free groups obtained as the union of a chain of finitely generated elementary free groups? (Note that by a chain, we mean a chain with order type $\omega$, i.e., a sequence of groups $G_0\le G_1\le\cdots\le G_n\le\cdots$ where $n<\omega$.) 

Notice that  every countable universally free group is a union of a chain of limit groups.
Indeed, a group is universally free if and only if it is locally a fully residually free group (every finitely generated subgroup is a limit group), see, for example  \cite[Theorem~5.9]{Chi2001introduction}.

\begin{theorem} A free product of abelian groups that are each elementarily equivalent to $\mathbb Z$ is an elementary free group.
\end{theorem}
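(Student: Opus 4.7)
The strategy is to show that $A := *_{i \in I} A_i$ is elementarily equivalent to a non-abelian free group. Since by the Kharlampovich--Myasnikov and Sela Tarski-type theorems all non-abelian free groups share a common first-order theory, it suffices to produce some non-abelian free group $F$ with $A \equiv F$; the group $A$ is then elementary free by definition.

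For finitely many factors, say $|I| = n \geq 2$, the tool is Sela's preservation of elementary equivalence under free products (from his work on the elementary theory of free products of groups): if $G_j \equiv H_j$ for $j = 1, \dots, n$, then $G_1 * \cdots * G_n \equiv H_1 * \cdots * H_n$. Applying this with $G_j = A_j$ and $H_j = \mathbb{Z}$ yields $A_1 * \cdots * A_n \equiv \mathbb{Z} * \cdots * \mathbb{Z} = F_n$, a non-abelian free group, so $A_1 * \cdots * A_n$ is elementary free.

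For an infinite index set, I would pass to the chain of finite sub-free-products: $A = \bigcup_J A_J$, where $A_J := *_{i \in J} A_i$ and $J$ ranges over finite subsets of $I$ of size at least $2$, directed by inclusion. Each $A_J$ is elementary free by the previous step. If each inclusion $A_J \hookrightarrow A_{J'}$ (for $J \subseteq J'$) is elementary, then by the union-of-an-elementary-chain theorem (Marker Proposition~2.3.11, as already cited in the paper), $A_J \prec A$, so $A \equiv A_J$ is elementary free.

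The main obstacle is justifying that inclusion: $A_J$ sits inside $A_{J'}$ as a non-abelian free factor, and I would need the analogue of Theorem~\ref{perin} for elementary free groups---namely, that a non-abelian free factor of an elementary free group is an elementary substructure. This should follow from the NTQ / hyperbolic-tower description of finitely generated elementary free groups together with the Kharlampovich--Myasnikov--Sela structure theory, but it may need a careful citation. An alternative that would sidestep the chain argument is to invoke an infinite-index version of Sela's preservation theorem (elementary equivalence of free products of arbitrarily many factors) in one shot, concluding directly that $A \equiv *_{i \in I} \mathbb{Z} = F_{|I|}$.
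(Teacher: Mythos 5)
Your proposal takes essentially the same route as the paper: the paper's entire proof is the citation of Sela's Theorem~7.1 on preservation of elementary equivalence under free products, which is exactly your finite-factor step ($A_1\ast\cdots\ast A_n\equiv\mathbb{Z}\ast\cdots\ast\mathbb{Z}=F_n$). The paper does not address the infinitely-many-factors case at all (its only application is the two-factor group $\mathbb{Z}\ast(\mathbb{Z}\oplus\mathbb{Q})$), so your additional chain argument --- whose missing ingredient, that a non-abelian free factor is elementarily embedded, you correctly flag --- goes beyond what the paper itself supplies.
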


This follows from \cite[Theorem 7.1]{Sel2010diophantine}, which states that for groups $A_1,B_1,A_2,B_2$, if $A_1$ is elementarily equivalent to $A_2$ and $B_1$ is elementarily equivalent to $B_2$, then $A_1\ast B_1$ is elementarily equivalent to $A_2\ast B_2$.

We now recall some of Szmielew's results on torsion-free abelian groups (see \cite{EklFis1972elementary}). Let $A$ be a torsion-free abelian group. Define $\alpha_p(A)=\dim(A/pA)$ over the field of $p$ elements, if it is finite, and $\alpha_p=\infty$ otherwise. For example, for any prime $p$, we have $\alpha_p(\mathbb Z)=1$. The Szmielew characteristic of $A$ is $\psi(A)=(\alpha_2(A), \alpha_3(A),\alpha_5(A), \ldots)$. Then for a torsion-free abelian group $B$, $\Th(A)=\Th(B)$ if and only if $\psi (A)=\psi (B)$. In particular, if $C$ is divisible, then $\Th(A)=\Th(A\oplus C)$, e.g., $\Th(\mathbb Z)=\Th(\mathbb Z\oplus \mathbb Q)$.  

Given a group $G$, $\dim (A/pA)<2$ for any abelian subgroup $A\le G$ if and only if
$$G\models\forall x_1,x_2 \exists y \prn{[x_1,x_2]=1\rightarrow  \bigvee_{(m_1,m_2)\in S}x_1^{m_1}x_2^{m_2}=y^p},$$
where $S$ is the set of all non-trivial tuples $(m_1,m_2)$ where $0\leq m_i<p.$

\begin{theorem} The elementary free group $T=\mathbb Z\ast(\mathbb Z\oplus \mathbb Q)$ cannot be represented as a union of a chain of finitely generated  elementary free groups.\end{theorem}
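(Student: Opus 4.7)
The plan is to assume for contradiction that $T=\bigcup_{n<\omega}G_n$ for an ascending chain $G_0\le G_1\le\cdots$ of finitely generated elementary free groups, and to obtain a contradiction from the right free factor $A=\mathbb{Z}\oplus\mathbb{Q}$ of $T$, viewed as an abelian subgroup of $T$.

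The first step I would carry out is to show that every abelian subgroup of a finitely generated elementary free group $G$ is trivial or infinite cyclic. Since $G\equiv F$, in particular $\Th_\forall(G)=\Th_\forall(F)$, so $G$ is a limit group, and thus every abelian subgroup of $G$ is finitely generated and torsion-free, hence isomorphic to $\mathbb{Z}^k$ for some $k\ge 0$. On the other hand, the sentence displayed just before the theorem statement (taken with $p=2$) holds in $F$ and therefore in $G$, forcing $\dim(B/2B)<2$ for every abelian subgroup $B\le G$. Combining these two facts gives $k\le 1$.

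Next, I would apply this to $A\cap G_n$: each such intersection is an abelian subgroup of $G_n$, hence (possibly trivial) cyclic. Because $T=\bigcup_n G_n$, we have $A=\bigcup_n(A\cap G_n)$, displaying $A$ as an ascending union of cyclic subgroups. To rule this out, I would pick $n$ large enough that both $e_1=(1,0)$ and $e_2=(0,1)$ lie in $G_n$, whence both lie in $A\cap G_n=\langle a\rangle$ for some $a=(a_1,a_2)\in\mathbb{Z}\oplus\mathbb{Q}$. Writing $e_1=m_1 a$ and $e_2=m_2 a$, one gets $m_1 a_2=0$ with $m_1\ne 0$, forcing $a_2=0$; but then $m_2 a_2=1$ is impossible.

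The main obstacle will be the first step, i.e., establishing that abelian subgroups of finitely generated elementary free groups are cyclic. This rests on two inputs: (i) the standard fact from the theory of limit groups that abelian subgroups are finitely generated and torsion-free, and (ii) the first-order expressibility of the constraint $\dim(B/pB)<2$ highlighted in the preamble. Once these are in place, the remainder of the argument is a short $\mathbb{Z}$-linear-independence calculation in $\mathbb{Z}\oplus\mathbb{Q}$.
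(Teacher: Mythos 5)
Your proposal is correct and takes essentially the same route as the paper: the paper's proof likewise reduces to the facts that $\mathbb{Z}\oplus\mathbb{Q}$ forces some group in the chain to contain a rank-$2$ free abelian subgroup and that finitely generated elementary free groups have only cyclic abelian subgroups. You merely spell out the latter fact via the limit-group structure of abelian subgroups and the displayed first-order sentence bounding $\dim(B/pB)$, which is exactly the justification the paper's preceding discussion sets up.
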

\begin{proof}
    Any chain of finitely generated groups whose union is $\Z\oplus\Q$ must at some step be isomorphic to the free abelian group of rank 2. So a chain whose union is $T$ must at some step include a non-cyclic abelian subgroup. But no finitely generated elementary free group can contain a non-cyclic abelian subgroup.
\end{proof}

\printbibliography

\end{document}